\newtheorem{teo}{Theorem}
\newtheorem{lemma}{Lemma}
\newtheorem{prop}{Proposition}
\newtheorem{defin}{Definition}
\newtheorem{cor}{Corollary}
\newenvironment{sistema}%
{\left\lbrace\begin{array}{@{}l@{}}}%
{\end{array}\right.}
\begin{document}

\title{Chaotic dynamics in an impact problem}
\author{\textbf{Stefano Marò} \\
\textit{\small{Dipartimento di Matematica - Università di Torino}}\\
\textit{\small{Via Carlo Alberto 10, 10123 Torino - Italy}}\\
\textit{\small{e-mail: stefano.maro@unito.it}}
}

\date{}
 \maketitle
\begin{abstract}
We consider the model describing the vertical motion of a ball falling with constant acceleration on a wall and elastically reflected. The wall is supposed to move in the vertical direction according to a given periodic function $f$. We show that a modification of a method of Angenent based on sub and super solutions can be applied in order to detect chaotic dynamics. Using the theory of exact symplectic twist maps of the cylinder one can prove the result under "natural" conditions on the function $f$.  
\end{abstract}


\section{Introduction}
Mechanical models with impacts appear in many engineering applications, such as the modeling of pneumatic hammers or various machinery with moving parts. This kind of models also appear in Theoretical Physics, for example the Fermi-Ulam oscillator provides a model for the motion of particles between two galaxies. It is not surprising that this topic has been widely studied both from the analytical and numerical point of view. We cite the monograph \cite{brogliato} and the references therein for a good insight.

In this paper we concentrate on a particular impact problem. We consider the model of a free falling ball on a moving racket. The racket is supposed to move periodically in the vertical direction according to a function $f(t)$ and the ball is reflected according to the law of elastic bouncing when hitting the racket. The only force acting on the ball is the gravity $g$. Moreover, it is usual to assume that the mass of the racket is huge with respect to the mass of the ball. It means that the impacts do not affect the motion of the racket.
  
A good strategy to describe the motion of the ball is to define a map $P$ that sends a couple $(t_0,v_0)$ representing the time of impact and the velocity immediately after it to the next impact time and corresponding velocity $(t_1,v_1)$. In order to write the map,  Holmes \cite{holmes} considered the approximation given by the assumption of a large amplitude of the motion of the ball with respect to the amplitude of the motion of the racket. As an example, he considered the case $f(t)=\beta\sin\omega t$. In this case the model is the one given by the so-called standard map
\begin{equation*}
\begin{sistema}
t_1=t_0+\frac{2}{g}v_0 \\
v_1=v_0+2\beta\omega\cos\omega t_1.
\end{sistema}
\end{equation*}
This map is area preserving and, for $v_0$ sufficiently large, is defined on the cylinder. It is widely studied and complex dynamics appear when the amplitude $\beta$ is sufficiently large. Holmes approximation suggests that this very simple mechanical model shows an interesting dynamics.\\
We are going to study the exact model, without Holmes approximation. In this case one is lead to the following map
\begin{equation*}
P:
\begin{sistema}
t_1=t_{0}+\frac{2}{g}v_{0}-\frac{2}{g}f[t_1,t_{0}]+\frac{2}{g}\dot{f}(t_0)
\\
v_1=v_{0}+\dot{f}(t_1)-2f[t_1,t_{0}]+\dot{f}(t_0)
\end{sistema}
\end{equation*}
where
$$
f[t_1,t_{0}]=\frac{f(t_1)-f(t_{0})}{t_1-t_{0}}.
$$
This is the map (up to changes of systems of references) considered by Pustyl'nikov \cite{pustsoviet, pust}. He proved that if the motion of the racket is periodic and there exists an instant in which the velocity is sufficiently large, precisely if
\begin{equation*}
\max\dot{f}\geq\frac{g}{2},
\end{equation*}  
then unbounded motions of the ball are possible. Adding some hypothesis on $f$ he was able to prove that there exists a set of positive measure in the phase-space $(t,v)$ leading to unbounded motions. Nonetheless bounded motions also exist. Actually, in \cite{marobounce} we proved that if $f$ is regular then for every real number $\omega$ sufficiently large, there exists a solution with rotation number $\omega$.  Among the huge amount of results concerning such model we cite \cite{desimoi, jiang, luohan, okn, ruiztorres}. It is worth mentioning also the paper by Dolgopyat \cite{dolgo} dealing with non-gravitational potentials and the paper by Kunze and Ortega \cite{kunzeortega2} dealing with non-periodic functions $f$.\\
Holmes results and the coexistence of bounded and unbounded motions, together with many numerical evidences, motivated us to the analytical study of the chaotic behaviour of the model. Chaotic dynamics are understood as the existence of a compact invariant set $\mathcal{K}$ such that some iterate of the map restricted to it is semi-conjugated to the Bernoulli shift. This is a classical definition of chaos and is used in many contexts. See \cite{kircstof,palmer} for more details. On this line, in a recent paper, Ruiz-Herrera and Torres \cite{ruiztorres} considered the application of a general topological tool based on stretching techniques. In this way, they constructed some particular periodic functions $f$ for which the corresponding model of bouncing ball shows chaotic dynamics. Note that, as in Holmes approach, their result is valid for a particular choice of the function $f$. We will show how a different approach, based on symplectic techniques, can give the result for a large class of functions characterized by a "natural" condition.    

It can be shown that if we pass from the variables time-velocity to the variables time-energy through the change $E=\frac{1}{2}v^2$ then $P$ is an exact symplectic twist map of the cylinder. This kind of maps have been widely studied and one can look at \cite{angenent, bangert,  matherams} to have a deep insight. Our result makes use of this theory. Precisely we are going to prove the following. If $f\in C^2(\mathbb{R})$ is $1$-periodic and such that
\begin{equation}\label{thu}
\max\dot{f}\geq\frac{g}{2} \quad\mbox{and}\quad \min\dot{f}\leq -\frac{g}{2},
\end{equation}          
then the model presents infinitely many invariant sets $\mathcal{K}$ with chaotic dynamics. Here we understand that there exist infinitely many disjoint invariant compact sets $\mathcal{K}_i$ and for every such set, some iterate of the map restricted to it is semi-conjugated to the Bernoulli shift. 

Condition (\ref{thu}) is the same considered by Pustyl'nikov to construct unbounded solutions. This latter fact is not casual. Indeed it comes from the work of Angenent \cite{angenent} that, for an exact symplectic twist map of the cylinder, the presence of an unbounded orbit in the future and an unbounded orbit in the past implies chaos.  

The orbits $(t_n,E_n)$ of the map $P$ can be characterized as the solutions of a recurrence relation for the sole variable $t_n$. More precisely, one can define the recurrence relation 
\begin{equation}\label{ricu}
\partial_{2}h(t_{n-1},t_{n})+\partial_{1}h(t_{n},t_{n+1})
\end{equation}
where the function $h$ is called generating function and characterizes exact symplectic twist maps. We have that $(t_n,E_n)$ is an orbit of $P$ if and only if
\begin{equation*}
\partial_{2}h(t_{n-1},t_{n})+\partial_{1}h(t_{n},t_{n+1})=0\quad\mbox{for every }n
\end{equation*}
and $E_n=\partial_{1}h(t_n,t_{n+1})$. Angenent considered general recurrence relations $\Delta(t_{n-1},t_n,t_{n+1})$ with similar properties as (\ref{ricu}). Supposing the existence of suitable sub and super solutions he was able to build special sequences $(t_n)$ satisfying 
$$
\Delta(t_{n-1},t_n,t_{n+1})=0 \quad\mbox{for every } n
$$
and from which one can detect chaos. This method cannot be applied directly to our situation. In fact, one needs $\Delta$ to be defined in the whole $\mathbb{R}^3$. It means that the generating function $h$ should be defined on the whole $\mathbb{R}^2$ (and, equivalently, the map $P$ to be defined on the whole cylinder). The latter condition is not satisfied as the map $P$ is defined only for large values of the energy. We will overcome this problem extending the map $P$ to the whole cylinder in a way such that Angenent method can be applied. Actually, it may occur that the invariant sets generating chaos stay in the part of the cylinder in which the original map $P$ is not defined. These sets would be meaningless for our problem. The heart of the paper is to prove that one can choose suitable sub and super solutions such that the corresponding sets $\mathcal{K}_i$ are contained in the part of the cylinder in which $P$ and $\tilde{P}$ coincide. This will produce the desired chaos for the original map $P$.\\
We conclude this introductory section saying that condition \ref{thu} is not optimal as it is reasonable that chaos might appear for a generic $f$. On the other hand one cannot expect chaos for every $1$-periodic function $f(t)$. For example, in the trivial case $f\equiv 0$ one gets the integrable twist map that does not show chaotic behaviour. 

The rest of the paper is organized as follows. In Section \ref{statement} we give a precise statement of the problem and of the result we want to prove. In Section \ref{modified} we construct the extension of the generating function and in Section \ref{subsuper} we construct the sub and super solution. Finally, Section \ref{prof} is dedicated to show that the sub and super solution that we constructed are the one from which we can get the result. This is achieved through a detailed revision of Angenent proof.

\section{Statement of the problem}\label{statement}
We are concerned with the problem of a bouncing ball on a moving racket in the vertical direction. The motion of the racket is supposed to be $1$-periodic and described by a function $f\in C^2(\mathbb{R})$. To study the motion of the ball, the approach of Pustylinkov \cite{pustsoviet} or Kunze-Ortega \cite{kunzeortega2} was to consider a discrete map investigating the behaviour of the sequences of impact times on the racket $(t_n)$ and the corresponding velocity immediately after the impact $(v_n)$. The map considered by Kunze and Ortega was the following: 
\begin{equation}
P:
\begin{sistema}\label{mapp}
t_1=t_{0}+\frac{2}{g}v_{0}-\frac{2}{g}f[t_1,t_{0}]+\frac{2}{g}\dot{f}(t_0)
\\
v_1=v_{0}+\dot{f}(t_1)-2f[t_1,t_{0}]+\dot{f}(t_0).
\end{sistema}
\end{equation}
In \cite{marobounce} one can see how to get the map $P$ starting from the associated differential equation describing the motion of the ball. The map $P$ is well defined for $v_0>\bar{v}$ for some $\bar{v}$ sufficiently large. So one can say that $P$ is a $C^2$ map of the cylinder $\mathbb{T}\times (\bar{v},+\infty)$, where $\mathbb{T}=\mathbb{R}/\mathbb{Z}$. Note that the condition $v_0>\bar{v}$ corresponds to the existence of a large $K_1>0$ such that  
\begin{equation*}
t_1-t_0>K_1.
\end{equation*}
This map expressed in the variables $(t_n,v_n)$ is not exact symplectic but $P(t_0,E_0)\mapsto(t_1,E_1)$ where $E_0:=\frac{1}{2}v_0^2$ is a $C^2$ exact symplectic map of the cylinder $\mathbb{T}\times (\bar{E},+\infty)$ where $\bar{E}$ is sufficiently large. The coordinates $(t_n,E_n)$ are conjugate, so the map can be expressed in terms of a $C^3$ function $h(t_0,t_1)$ defined for $t_1-t_0>K_1$ such that
\begin{equation*}
\begin{sistema}
\partial_1h(t_0,t_1)=E_0 \\
\partial_2h(t_0,t_1)=-E_1.
\end{sistema}
\end{equation*}
The function $h$ is called generating function, satisfies the periodicity condition
$$
h(t_0+1,t_1+1)=h(t_0,t_1)
$$ 
and can be explicitly computed. One can see that a sequence $(t_n,E_n)$ is an orbit of $P$ if and only if
\begin{equation}\label{statt}
\partial_{2}h(t_{n-1},t_{n})+\partial_{1}h(t_{n},t_{n+1})=0\quad\mbox{for every }n
\end{equation}
and $E_n=\partial_{1}h(t_n,t_{n+1})$ for every $n$. Sequences satisfying (\ref{statt}) are called $h$-stationary configurations. See \cite{kunzeortega2} for more details.

With this setting we shall prove 
\begin{teo}\label{maint}
Let $f$ be $1$-periodic and of class $C^2$. Suppose that
\begin{equation}\label{hpf}
\max\dot{f}\geq\frac{g}{2} \quad\mbox{and}\quad \min\dot{f}\leq -\frac{g}{2}.
\end{equation} 
Then there exist a sequence of compact sets $\mathcal{K}_i\subset\mathbb{T}\times(\bar{E},+\infty)$ and a sequence of positive integers $Q_i$ such that $P^{Q_i}$ leaves $\mathcal{K}_i$ invariant and is semi-conjugated to a Bernoully shift when restricted to $\mathcal{K}_i$.
\end{teo}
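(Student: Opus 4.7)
The overall strategy is to apply Angenent's variational method \cite{angenent}: for an exact symplectic twist map of the cylinder, a globally defined sub- and super-solution of the Euler--Lagrange-type recurrence (\ref{statt}), lying on opposite ``sides'' in the configuration variable, yield an abundance of heteroclinic $h$-stationary configurations, out of which one extracts compact invariant sets on which some iterate is semi-conjugated to a Bernoulli shift. The natural candidates for the sub- and super-solution are an orbit whose energy diverges as $n\to+\infty$ and one whose energy diverges as $n\to-\infty$. The criterion $\max\dot f\ge g/2$ is precisely Pustyl'nikov's condition for the existence of a forward unbounded orbit; the symmetric condition $\min\dot f\le -g/2$, by the time-reversal symmetry of the bouncing-ball model, yields a backward unbounded orbit. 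Hypothesis (\ref{hpf}) is thus exactly the input needed.

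The first obstacle is that $h$ is only defined on $\{t_1-t_0>K_1\}$, while Angenent's recurrence must live on $\mathbb{R}^3$. Following the program outlined for Section~\ref{modified}, I would extend $h$ to a function $\tilde h\in C^3(\mathbb{R}^2)$ which is still diagonally $1$-periodic, still satisfies a uniform twist condition $\partial_{12}\tilde h\ne 0$, and coincides with $h$ whenever $t_1-t_0>K_1$. A cut-off and patching argument suffices; the only care needed is to keep the sign of the mixed partial constant so that $\tilde h$ genuinely generates an exact symplectic twist map $\tilde P$ of the full cylinder, agreeing with $P$ on $\mathbb{T}\times(\bar E,+\infty)$.

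Next (Section~\ref{subsuper}) I would turn the Pustyl'nikov-type unbounded orbits into a global sub-solution $(t_n^-)$ and a global super-solution $(t_n^+)$ of
\begin{equation*}
\tilde\Delta(t_{n-1},t_n,t_{n+1})=\partial_2\tilde h(t_{n-1},t_n)+\partial_1\tilde h(t_n,t_{n+1}).
\end{equation*}
The condition $\max\dot f\ge g/2$ yields a sequence along which $E_n\to+\infty$ and the consecutive gaps $t_{n+1}-t_n$ grow; truncating and gluing to a periodic tail produces a sub-solution. The condition $\min\dot f\le -g/2$ yields, symmetrically, the super-solution. Crucially one can start the two unbounded orbits arbitrarily high, which translates into the freedom to place $(t_n^-)$ and $(t_n^+)$ deep inside the high-energy region; this quantitative flexibility is what will eventually confine the chaotic sets to the physical domain.

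Finally (Section~\ref{prof}), following Angenent, for each period $Q_i$ and each admissible symbolic pattern between $(t_n^\pm)$ one selects a minimizer of the discrete action realising that pattern; the closure of the resulting family of $h$-stationary configurations yields a compact $\tilde P^{Q_i}$-invariant set $\mathcal{K}_i$ on which $\tilde P^{Q_i}$ is semi-conjugate to a Bernoulli shift, and varying $Q_i$ and the pattern gives infinitely many disjoint such $\mathcal{K}_i$. The main difficulty, and the real novelty over a verbatim application of Angenent's theorem, is to ensure that $\mathcal{K}_i\subset\mathbb{T}\times(\bar E,+\infty)$, so that $\tilde P$ may be replaced by the original $P$. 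This requires a quantitative revision of Angenent's proof: using that $(t_n^\pm)$ are taken at very high energy, one derives uniform lower bounds on the gaps $t_{n+1}-t_n$ along any minimizer squeezed between $(t_n^-)$ and $(t_n^+)$, forcing $t_{n+1}-t_n>K_1$ and hence $E_n=\partial_1\tilde h(t_n,t_{n+1})=\partial_1 h(t_n,t_{n+1})>\bar E$ for every $n$. This step is the heart of the proof; once it is established, chaos for $\tilde P$ transfers directly to $P$ and Theorem~\ref{maint} follows.
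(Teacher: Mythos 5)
Your overall strategy matches the paper's: extend the generating function $h$ to a globally defined $\tilde h$ with uniform twist, glue Pustyl'nikov-type unbounded orbits to periodic tails to produce a sub- and a super-solution, invoke Angenent's theorem, and then refine the argument quantitatively so the resulting invariant sets live where $P$ and $\tilde P$ agree. Two points, however, prevent the proposal from being a correct sketch as written.

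First, the assignment of the accelerating and decelerating orbits to sub- and super-solution is reversed. In Angenent's Theorem, the super-solution must satisfy $\liminf_{n\to-\infty}\overline t_n/n\leq\omega_0$ and $\liminf_{n\to+\infty}\overline t_n/n\geq\omega_1$ with $\omega_0<\omega_1$, so it must transition from the slower rotation number on the left to the faster one on the right; this forces the middle segment to have \emph{increasing} gaps, i.e.\ the accelerating orbit furnished by $\max\dot f\geq g/2$. Dually, the sub-solution transitions from the faster rotation number on the left to the slower one on the right and therefore must be glued with the \emph{decelerating} orbit coming from $\min\dot f\leq -g/2$. One can also verify via the gluing lemma (the ordering $s_{\tilde n}\leq t_{\tilde n}\leq t_{\tilde n+1}\leq s_{\tilde n+1}$ for super-solutions) that the accelerating segment can only be inserted with the slower periodic tail on the left, which is precisely the super-solution configuration. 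Your proposal puts the accelerating orbit in the sub-solution and the decelerating orbit in the super-solution; that combination does not satisfy the required ordering at the joints.

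Second, and more substantially, the step you rightly call ``the heart of the proof'' --- showing that every $\tilde h$-stationary configuration sandwiched between the sub- and super-solutions has $x_{n+1}-x_n>K$ for all $n$ --- is asserted but not justified, and it is not automatic. The sandwiching $\underline x_{e,n}-M\leq x_n\leq\overline x_{e,n}+M$ constrains $x_n$ pointwise but says nothing directly about the gap $x_{n+1}-x_n$ at an individual index; a priori a squeezed configuration could have occasional small gaps compensated by large ones. The missing ingredient is a uniform bound on the discrete second difference of any $\tilde h$-stationary configuration, namely that $\bigl|\,|t_{n+1}-t_n|-|t_n-t_{n-1}|\,\bigr|\leq C$ with $C$ independent of the configuration. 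This is proved by splitting into the three regimes of $\tilde P$ (the original map for $t_1-t_0>K$, the integrable free map for $t_1-t_0<K-\epsilon$, and a compact transition band), and it is what makes the contradiction argument work: one small gap inside a block of length $Q$ would, via this Lipschitz-type bound, cap the total displacement $|x_{(n+1)Q}-x_{nQ}|$ below the floor that the sandwiching and the choice of $m$, $Q$ impose. Without isolating this estimate, the claim that high energy of the sub/super solutions forces $t_{n+1}-t_n>K$ along the minimizers does not follow.
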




\section{The modified generating function}\label{modified}

We remember that the generating function $h$ is $C^3$ and defined for $t_1-t_0>K_1$. Moreover, one could prove (\cite{marobounce}) that there exist $K\geq K_1$ and $\delta>0$ such that $\partial_{12}h>\delta$ for $t_1-t_0>K$. Now extend $\partial_{12}h$ in the region $t_1-t_0\leq K$ as a $C^1$ bounded function with lower bound given by $\delta$. Consider $\psi$ a $C^\infty$ non-decreasing cut-off function of $\mathbb{R}$ such that
\begin{equation*}
\begin{sistema}
\psi=1 \mbox{ if }t_1-t_0>K-\epsilon/4 \\
\psi=0 \mbox{ if } t_1-t_0<K-3\epsilon/4.
\end{sistema}
\end{equation*}
Now let $\chi(t_0,t_1)=\psi(t_1-t_0)$ and note that $\chi(t_0+1,t_1+1)=\chi(t_0,t_1)$.     
Define the new function 
$$
D=\chi\partial_{12}h+(1-\chi)\delta.
$$
We have that $D\in C^1(\mathbb{R}^2)$, $D(t_1+1,t_0+1)=D(t_1,t_0)$, $D\geq\delta$ and 
\begin{equation*}
\begin{sistema}
D=\partial_{12}h \quad\mbox{ if }t_1-t_0>K \\
D=\delta \quad\mbox{ if }t_1-t_0<K-\epsilon
\end{sistema}
\end{equation*}
With a similar argument as in \cite{marobounce} we can consider the following Cauchy problem for the wave equation
\begin{equation*}
\begin{sistema}
\partial_{12}u=D(t_0,t_1)\\
u(t_0,t_0+K)=h(t_0,t_0+K)\\
(\partial_2u-\partial_1u)(t_0,t_0+K)=(\partial_2h-\partial_1h)(t_0,t_0+K).
\end{sistema}
\end{equation*}
 The solution $\tilde{h}$ is defined on $\mathbb{R}^2$, is such that $\tilde{h}\in C^2$, $\tilde{h}(\theta_1+1,\theta+1)=\tilde{h}(\theta_1,\theta)$, $\partial_{12}\tilde{h}=D\geq\delta$, $\tilde{h}=h$ for $t_1-t_0>K$ and $\tilde{h}(t_0,t_1)=-\frac{\delta}{2}(t_1-t_0)^2$ for $t_1-t_0<K-\epsilon$. Note that the function $\tilde{h}$ is the generating function of a diffeomorphism $\tilde{P}$ of the cylinder in the sense that
 \begin{equation*}
 \begin{sistema}
 E_0=\partial_1\tilde{h}(t_0,t_1) \\
 E_1=-\partial_2\tilde{h}(t_0,t_1).
 \end{sistema}
 \end{equation*}
If we define the curves $\alpha(t_0)=\partial_1\tilde{h}(t_0,t_0+K-\epsilon)$ and $\beta(t_0)=\partial_1\tilde{h}(t_0,t_0+K)$, the cylinder is divided in three regions
\begin{equation*}
\begin{split}
\Sigma_+ & =\{ (t_0,E_0): E_0>\beta(t_0)\}, \\
\Sigma_0 & =\{ (t_0,E_0): \alpha(t_0)\leq E_0\leq\beta(t_0)\},  \\
\Sigma_- & =\{ (t_0,E_0): E_0<\alpha(t_0)\}.
\end{split}
\end{equation*}  
By the special form of $\tilde{h}$ we have that $\tilde{P}$ takes the following form
\begin{equation}\label{form}
\begin{sistema}
\tilde{P}(t_0,E_0)=P(t_0,E_0)\quad\mbox{on } \Sigma_+  \\
\tilde{P}(t_0,E_0)=(t_0+\delta^{-1}E_0,E_0)\quad\mbox{on } \Sigma_-.
\end{sistema}
\end{equation}

We remember that the generating function also gives information on how to detect orbits of the corresponding diffeomorphism. By the importance of this fact we need to introduce some notation. Define, for $(a,b,c)\in \mathbb{R}^3$
$$
\tilde{\Delta}(a,b,c)=\partial_{2}\tilde{h}(a,b)+\partial_{1}\tilde{h}(b,c).
$$
One can prove that the following properties, that we will call properties \textbf{(A)}, are satisfied:
\begin{itemize}
\item[(a1)] $\tilde{\Delta}(a,b,c)$ is strictly increasing in the variables $a$ and $c$ (remembering that $\partial_{12}\tilde{h}>\delta>0$),
\item[(a2)] $\tilde{\Delta}(a,b,c)=\tilde{\Delta}(a+1,b+1,c+1)$,
\item[(a3)] $\lim_{a\to\pm\infty}\tilde{\Delta}(a,b,c)=\pm\infty$ and $\lim_{c\to\pm\infty}\tilde{\Delta}(a,b,c)=\pm\infty$
\end{itemize}
Moreover we remember that a sequence $(t_n,E_n)$ is an obit of $\tilde{P}$ if and only if
\begin{equation}\label{stat}
\tilde{\Delta}(t_{n-1},t_{n},t_{n+1})=0\quad\mbox{for every }n
\end{equation}
and $E_n=\partial_{1}\tilde{h}(t_n,t_{n+1})$ for every $n$. Sequences satisfying (\ref{stat}) are called $\tilde{h}$-stationary configurations.\\

The following proposition and corollary give bounds on the growth of a $\tilde{h}$-stationary configuration.
\begin{prop}\label{conti}
There exists $C>0$ such that if $(t_n)_{n\in\mathbb{Z}}$ is a $\tilde{h}$-stationary configuration
then
\begin{equation}\label{defc}
|\: | t_{n+1}-t_n| - |t_n-t_{n-1}|\: | \leq C.
\end{equation}
for every $n\in\mathbb{Z}$.
\end{prop}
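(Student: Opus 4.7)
The plan is to rewrite the $\tilde h$-stationary equation in terms of the consecutive gaps $y_n := t_n - t_{n-1}$ and show that the induced recurrence on these gaps is a bounded perturbation of the identity. Set
$$
A(t, y) := -\partial_2 \tilde h(t - y, t), \qquad B(t, y) := \partial_1 \tilde h(t, t + y),
$$
so that the stationary equation (\ref{stat}) reads $A(t_n, y_n) = B(t_n, y_{n+1})$. Differentiating gives $\partial_y A(t, y) = \partial_{12}\tilde h(t - y, t) \geq \delta$ and $\partial_y B(t, y) = \partial_{12}\tilde h(t, t + y) \geq \delta$, so each of $A(t, \cdot)$ and $B(t, \cdot)$ is a $C^1$-diffeomorphism of $\mathbb{R}$. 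Consequently one may define $\Phi(t, y)$ implicitly by $B(t, \Phi(t, y)) = A(t, y)$, and the proposition reduces to bounding $\Psi(t, y) := \Phi(t, y) - y$ on $\mathbb{R}^2$. The periodicity $\tilde h(t_0+1, t_1+1) = \tilde h(t_0, t_1)$ makes $\Phi$ $1$-periodic in $t$, so it suffices to bound $\Psi$ on $[0, 1] \times \mathbb{R}$.

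I would split the analysis in $y$ into three regimes. For $y < K - \epsilon$, the explicit form $\tilde h(t_0, t_1) = -\frac{\delta}{2}(t_1 - t_0)^2$ gives $A(t, y) = B(t, y) = \delta y$, and the unique solution of $B(t, \Phi) = \delta y$ staying in the lower region is $\Phi(t, y) = y$, hence $\Psi \equiv 0$ there. For $y > K$, $\tilde h = h$ and the explicit form of $P$ yields
$$
A(t, y) = \tfrac{1}{2}\Bigl(\tfrac{g}{2} y + 2\dot f(t) - \tfrac{f(t) - f(t - y)}{y}\Bigr)^2, \qquad B(t, y) = \tfrac{1}{2}\Bigl(\tfrac{g}{2} y + \tfrac{f(t + y) - f(t)}{y}\Bigr)^2.
$$
For $y$ sufficiently large both base quantities are positive, and strict monotonicity of $B(t, \cdot)$ forces the branch $\sqrt{2 A(t, y_n)} = \sqrt{2 B(t, y_{n+1})}$, which rearranges to
$$
\frac{g}{2}(y_n - y_{n+1}) = \frac{f(t_n + y_{n+1}) - f(t_n)}{y_{n+1}} + \frac{f(t_n) - f(t_n - y_n)}{y_n} - 2\dot f(t_n).
$$
The right-hand side is bounded by $4\|\dot f\|_\infty$ via the Mean Value Theorem, yielding $|\Psi(t, y)| \leq 8\|\dot f\|_\infty / g$ for $y$ beyond some threshold $M$. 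On the remaining compact strip $[0, 1] \times [K - \epsilon, M]$ the continuous function $\Psi$ is bounded by compactness alone. Combining the three estimates one obtains a global $C > 0$ with $|\Phi(t, y) - y| \leq C$, hence $|y_{n+1} - y_n| \leq C$, and the claim $\big||y_{n+1}| - |y_n|\big| \leq C$ follows from the reverse triangle inequality.

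The main technical subtlety is matching the two extreme regimes at the transition $y \simeq K$: since $B$ grows only linearly in the lower region but quadratically in the upper one, I need to verify that once $y_n$ is large enough the value $A(t_n, y_n) \sim g^2 y_n^2/8$ can only be realised by a $y_{n+1}$ also in the upper region, so that the square-root step and the choice of positive sign are legitimate. Monotonicity of $B(t, \cdot)$ on $\mathbb{R}$ together with the explicit asymptotics makes this step routine, but it is where one must be careful to avoid a spurious branch in which $y_{n+1}$ would be forced to the wrong sign.
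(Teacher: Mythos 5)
Your proof is correct and follows essentially the same three-regime strategy as the paper: treat small gaps via the explicit quadratic form of $\tilde h$, intermediate gaps via periodicity plus compactness, and large gaps via the explicit formulas coming from the original map $P$. The main presentational difference is that you repackage the argument as a statement about the implicitly defined gap map $\Phi(t,y)$ solving $B(t,\Phi)=A(t,y)$, showing it is a bounded perturbation of the identity, whereas the paper directly writes $t_2-t_1 = t_1-t_0 + F(t_0,t_1)$ with $F$ bounded and handles the case distinction by inspection of $t_1-t_0$. Your reformulation is somewhat cleaner and, importantly, makes explicit the one point the paper glosses over: for $t_1-t_0>K$ the paper's direct use of $P$ to compute $t_2$ tacitly assumes $(t_1,E_1)\in\Sigma_+$, i.e.\ $t_2-t_1>K$; your remark that one must verify that a large $A(t_n,y_n)$ forces $y_{n+1}$ to lie in the upper regime is exactly the missing check, and your sketch (monotonicity of $B(t,\cdot)$ plus the quadratic growth) is the right way to fill it. Two small computational slips that cancel each other: from the given formula for $P$ one finds $v_1 = \frac{g}{2}y - \frac{f(t)-f(t-y)}{y} + \dot f(t)$, so $A$ should carry a single $\dot f(t)$ rather than $2\dot f(t)$, and $v_0 = \frac{g}{2}y + \frac{f(t+y)-f(t)}{y} - \dot f(t)$, so $B$ is missing a $-\dot f(t)$; when you subtract, however, the combined $-2\dot f(t_n)$ on the right-hand side of your rearranged identity is correct, so the final estimate $|\Psi|\le 8\|\dot f\|_\infty/g$ for large $y$ stands.
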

\begin{proof}
Let $(t_n,E_n)$ be the complete orbit of the corresponding diffeomorphism $\tilde{P}$. We just have to prove that the function
$$
S(t_0,t_1)=|\:|t_{2}-t_1| - |t_1-t_{0}|\:|
$$
is bounded on $\mathbb{R}^2$. 
Note that, being $(t_n)$ the first component of an orbit we have that $t_2=t_2(t_0,E_0)=t_2(t_0,t_1)$. The diffeomorphism is defined on the cylinder, hence we have that $t_2(t_0+1,t_1+1)=t_2(t_0,t_1)+1$. This is employed to prove that
\begin{equation}\label{peri}
S(t_0+1,t_1+1)=S(t_0,t_1).
\end{equation}
If $t_1-t_0<K-\epsilon$, it comes easily from (\ref{form}) that $S(t_0,t_1)=0$. If $K-\epsilon \leq t_1-t_0 \leq K$, the periodicity (\ref{peri}) allows to consider only the compact set $\{ (t_0,t_1)\in \mathbb{R}^2:t_0\in[0,1], K-\epsilon \leq t_1-t_0 \leq K \}$ on which the continuous function $S$ is bounded by the Weierstrass theorem. Finally if $t_1-t_0>K$ we can consider the original map $P$ written in the variable $(t_0,v_0)$. The change of variable $E=\frac{1}{2}v^2$ does not alter the sequence of impact times. We have
$$
t_2=t_{1}+\frac{2}{g}v_{1}-\frac{2}{g}f[t_2,t_{1}]+\frac{2}{g}\dot{f}(t_1).
$$      
Substituting the expression of $v_1$ we have
$$
t_2=t_{1}+\frac{2}{g}v_{0}+\frac{4}{g}\dot{f}(t_1)-\frac{4}{g}f[t_1,t_{0}]+\frac{2}{g}\dot{f}(t_0)-\frac{2}{g}f[t_2,t_{1}]
$$
and, substituting the expression of $\frac{2}{g}v_{0}$, 
$$
t_2-t_1=t_1-t_0+F(t_0,t_1)
$$
where
$$
F(t_0,t_1)=\frac{4}{g}\dot{f}(t_1)-\frac{2}{g}f[t_1,t_{0}]-\frac{2}{g}f[t_2,t_{1}]
$$
is bounded. From this it is easy to conclude.
\end{proof}
\begin{cor}\label{stimm}
Let $(t_n)$ be a $\tilde{h}$-stationary configuration. Consider two non-negative integers $Q$ and $j$ such that $0\leq j<Q$. Then
$$
|t_Q-t_0|\leq |t_{j+1}-t_j|Q+\frac{Q(Q-1)}{2}C.
$$ 
\end{cor}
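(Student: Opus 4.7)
The plan is to reduce the statement to a straightforward telescoping argument built on the previous proposition. First I would apply the triangle inequality to write
\[
|t_Q-t_0|\;\le\;\sum_{n=0}^{Q-1}|t_{n+1}-t_n|,
\]
so that the problem becomes one of estimating each consecutive jump $d_n:=|t_{n+1}-t_n|$ in terms of a single reference jump $d_j=|t_{j+1}-t_j|$.

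Next I would use Proposition \ref{conti} iteratively. Since $|d_n-d_{n-1}|\le C$ holds for every $n$, a telescoping sum gives
\[
|d_n-d_j|\;\le\;|n-j|\,C,
\]
hence $d_n\le d_j+|n-j|\,C$ for every integer $n$. Substituting into the triangle-inequality estimate yields
\[
|t_Q-t_0|\;\le\;Q\,d_j\;+\;C\sum_{n=0}^{Q-1}|n-j|.
\]

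The only remaining point is to check that $\sum_{n=0}^{Q-1}|n-j|\le Q(Q-1)/2$ whenever $0\le j<Q$. Setting $a=j$, $b=Q-1-j$ so that $a+b=Q-1$, a direct computation gives
\[
\sum_{n=0}^{Q-1}|n-j|=\frac{a(a+1)}{2}+\frac{b(b+1)}{2}=\frac{a^{2}+b^{2}+(Q-1)}{2},
\]
and since $a,b\ge 0$ one has $a^{2}+b^{2}\le(a+b)^{2}=(Q-1)^{2}$, so the sum is bounded by $\big((Q-1)^{2}+(Q-1)\big)/2=Q(Q-1)/2$. Combining everything yields the desired inequality. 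No step here is an obstacle: the proposition already contains the only real content, and the rest is a mechanical combinatorial estimate of the sum $\sum|n-j|$, which is in fact maximal precisely at the endpoints $j=0$ and $j=Q-1$.
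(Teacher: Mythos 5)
Your proof is correct and is essentially the same as the paper's: both telescope the relation $\bigl|\,|t_{n+1}-t_n|-|t_n-t_{n-1}|\,\bigr|\le C$ to obtain $|t_{n+1}-t_n|\le|t_{j+1}-t_j|+|n-j|C$ and then sum over $n=0,\dots,Q-1$. You state the telescoping bound once and evaluate $\sum_{n=0}^{Q-1}|n-j|$ in closed form, bounding $a^2+b^2$ by $(a+b)^2$; the paper instead splits the sum at index $j$ and runs two separate inductions for the pieces $n<j$ and $n>j$, then collapses $\frac{j(j+1)}{2}+\frac{(Q-j)(Q-j-1)}{2}\le\frac{Q(Q-1)}{2}$ by a small algebraic manipulation. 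The underlying quantities are the same; your presentation is simply more compact.
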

\begin{proof}
Before beginning the computations, note that from condition (\ref{defc}) we have
\begin{equation}\label{c1}
|t_{n+1}-t_n| \leq |t_n-t_{n-1}|+C
\end{equation}  
and
\begin{equation}\label{c2}
|t_{n}-t_{n-1}| \leq |t_{n+1}-t_{n}|+C
\end{equation}  
for every $n\in\mathbb{Z}$. Let us start. First of all we have that
\begin{equation}\label{rompi}
|t_Q-t_0|\leq \sum_{n=0}^{j-1}|t_{n+1}-t_n|+|t_{j+1}-t_j|+\sum_{n=j+1}^{Q-1}|t_{n+1}-t_n|.
\end{equation}
Let us prove by induction on $j\geq 1$ that
\begin{equation}\label{unostar}
\sum_{n=0}^{j-1}|t_{n+1}-t_n|\leq j|t_{j+1}-t_j|+\frac{j(j+1)}{2}C
\end{equation}
If $j=1$ it is true by (\ref{c2}). Moreover, we have
$$
\sum_{n=0}^{j}|t_{n+1}-t_n|=\sum_{n=0}^{j-1}|t_{n+1}-t_n|+|t_{j+1}-t_j|\leq (j+1)|t_{j+1}-t_j|+\frac{j(j+1)}{2}C
$$
where we have applied the inductive hypothesis. One can conclude easily applying (\ref{c2}) to $|t_{j+1}-t_j|$. Analogously, for every $j\geq 0$, let us prove by induction on $Q\geq j+2$ that
\begin{equation}\label{duestar}
\sum_{n=j+1}^{Q-1}|t_{n+1}-t_n|\leq (Q-j-1)|t_{j+1}-t_j|+\frac{(Q-j)(Q-j-1)}{2}C.
\end{equation}   
If $Q=j+2$ it is true by (\ref{c1}). Moreover, we have
\begin{equation}\label{rompi1}
\begin{split}
\sum_{n=j+1}^{Q}|t_{n+1}-t_n|&=\sum_{n=j+1}^{Q-1}|t_{n+1}-t_n|+|t_{Q+1}-t_Q|\\
							&\leq (Q-j-1)|t_{j+1}-t_j|+\frac{(Q-j)(Q-j-1)}{2}C+ |t_{Q+1}-t_Q|
\end{split}
\end{equation}
where we have used the inductive hypothesis. We want to estimate the quantity $|t_{Q+1}-t_Q|$ in terms of $|t_{j+1}-t_j|$. To this purpose is not difficult to prove that for every $n\in\mathbb{Z}$
\begin{equation}\label{doublestar}
|t_{n+\bar{n}+1}-t_{n+\bar{n}}|\leq |t_{n}-t_{n-1}|+(\bar{n}+1)C
\end{equation}
where $\bar{n}$ is a non-negative integer. The proof goes by induction on $\bar{n}$ using (\ref{c1}). 
Writing $Q=(j+1)+(Q-j-1)$ and applying (\ref{doublestar}) with $n=j+1$ and $\bar{n}=Q-j-1$ we get the desired estimate
$$
|t_{Q+1}-t_Q| \leq |t_{j+1}-t_j|+(Q-j)C.
$$
One can easily verify that, substituting it into (\ref{rompi1}), we conclude.\\
We proved estimates (\ref{unostar}) and (\ref{duestar}) to put them into (\ref{rompi}) and get
\begin{equation*}
\begin{split}
|t_Q-t_0|&\leq j|t_{j+1}-t_j|+\frac{j(j+1)}{2}C+|t_{j+1}-t_j|\\
         &+(Q-j-1)|t_{j+1}-t_j|+\frac{(Q-j)(Q-j-1)}{2}C \\
         &= Q|t_{j+1}-t_j|+\frac{j(j+1)}{2}C+\frac{(Q-j)(Q-j-1)}{2}C \\
         &\leq Q|t_{j+1}-t_j|+\frac{j(j+1)}{2}C+\frac{(Q+j)(Q-j-1)}{2}C \\
         &=Q|t_{j+1}-t_j|+\frac{Q(Q-1)}{2}C.
\end{split}
\end{equation*}
The proof is complete.
\end{proof}

\section{Sub-solutions and super-solutions}\label{subsuper}
The main tools of our work are introduced in the following 
\begin{defin}
A sequence $(t_n)$ is called sub-solution for a recurrence relation satisfying properties \textbf{(A)} if 
$$
\tilde{\Delta}(t_{n-1},t_n,t_{n+1})\geq 0 \quad\mbox{ for every } n\in\mathbb{Z};
$$
analogously is called a super-solution if
$$
\tilde{\Delta}(t_{n-1},t_n,t_{n+1})\leq 0 \quad\mbox{ for every } n\in\mathbb{Z}.
$$
\end{defin}

During the paper we are going to glue sub and super solutions to get new sub and super solutions. So the following lemma will be useful.
\begin{lemma}\label{glue}
Let $(s_n)_{n\in\mathbb{Z}}$ and $(t_n)_{n\in\mathbb{Z}}$ be two super-solutions (sub-solutions) for a recurrence relation $\tilde{\Delta}$ satisfying property (a1). Consider the sequence 
\begin{equation*}
\tilde{s}_n=
\begin{sistema}
s_n\quad\mbox{if } n\leq\tilde{n} \\
t_n\quad\mbox{if } n > \tilde{n}
\end{sistema}
\end{equation*}  
for some $\tilde{n}\in\mathbb{Z}$. The sequence $(\tilde{s}_n)$ is a super-solution (sub-solution) if and only if
$$
s_{\tilde{n}}\leq t_{\tilde{n}} \leq t_{\tilde{n}+1} \leq s_{\tilde{n}+1} \quad (t_{\tilde{n}}\leq s_{\tilde{n}} \leq s_{\tilde{n}+1} \leq t_{\tilde{n}+1}) 
$$
\end{lemma}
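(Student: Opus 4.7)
The plan is to check the super-solution inequality $\tilde\Delta(\tilde s_{n-1},\tilde s_n,\tilde s_{n+1})\leq 0$ index by index, isolating the indices at which the gluing actually mixes the two sequences. For $n\leq\tilde n-1$ the triple $(\tilde s_{n-1},\tilde s_n,\tilde s_{n+1})$ coincides with $(s_{n-1},s_n,s_{n+1})$, and for $n\geq\tilde n+2$ with $(t_{n-1},t_n,t_{n+1})$, so in both ranges the inequality is inherited directly from the hypotheses that $(s_n)$ and $(t_n)$ are super-solutions. The work is therefore concentrated at the two boundary indices $n=\tilde n$ and $n=\tilde n+1$, where the triples mix entries of both sequences.

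At $n=\tilde n$ the relevant quantity is $\tilde\Delta(s_{\tilde n-1},s_{\tilde n},t_{\tilde n+1})$. Property (a1) says $\tilde\Delta$ is strictly increasing in its third argument, so the hypothesis $t_{\tilde n+1}\leq s_{\tilde n+1}$ combined with the super-solution property of $(s_n)$ gives
$$
\tilde\Delta(s_{\tilde n-1},s_{\tilde n},t_{\tilde n+1})\;\leq\;\tilde\Delta(s_{\tilde n-1},s_{\tilde n},s_{\tilde n+1})\;\leq\;0.
$$
At $n=\tilde n+1$ the relevant quantity is $\tilde\Delta(s_{\tilde n},t_{\tilde n+1},t_{\tilde n+2})$; monotonicity in the first argument together with $s_{\tilde n}\leq t_{\tilde n}$ and the super-solution property of $(t_n)$ yields
$$
\tilde\Delta(s_{\tilde n},t_{\tilde n+1},t_{\tilde n+2})\;\leq\;\tilde\Delta(t_{\tilde n},t_{\tilde n+1},t_{\tilde n+2})\;\leq\;0.
$$
So the two endpoint inequalities $s_{\tilde n}\leq t_{\tilde n}$ and $t_{\tilde n+1}\leq s_{\tilde n+1}$ are exactly what (a1) needs; the middle comparison $t_{\tilde n}\leq t_{\tilde n+1}$ is recorded to keep the chain ordered, consistently with the impact-time interpretation.

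The sub-solution statement is the mirror image: swap the roles of $(s_n)$ and $(t_n)$ and reverse every inequality, applying (a1) in the opposite direction at exactly the same two boundary indices. For the ``only if'' direction I would run the same monotonicity arguments in reverse, using the strictness in (a1): a violation of one of the endpoint inequalities forces one of the mixed values $\tilde\Delta(s_{\tilde n-1},s_{\tilde n},t_{\tilde n+1})$ or $\tilde\Delta(s_{\tilde n},t_{\tilde n+1},t_{\tilde n+2})$ to be strictly above the corresponding value on $(s_n)$ or $(t_n)$, which together with the complementary inequality (assumed by hypothesis or reduced to an equality case) contradicts the super-solution property of $(\tilde s_n)$ at $\tilde n$ or $\tilde n+1$.

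The only obstacle here is bookkeeping: the lemma is tailored precisely to the monotonicity asserted in (a1), so the content is essentially the observation that strict monotonicity of $\tilde\Delta$ in the outer arguments is the natural tool for the kind of ``cut and paste'' surgery that will be iterated when sub- and super-solutions are assembled later in the paper.
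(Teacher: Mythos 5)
Your ``if'' direction is correct and follows exactly the paper's argument: after normalizing $\tilde n=0$, the inequality is trivially inherited at indices $n\leq -1$ and $n\geq 2$, while at $n=0$ and $n=1$ one uses monotonicity of $\tilde\Delta$ in the outer arguments together with $t_1\leq s_1$ (resp.\ $s_0\leq t_0$) to reduce to the super-solution inequality for $(s_n)$ (resp.\ $(t_n)$). This is precisely the proof the paper gives, and this is the only direction that is actually used in the sequel.

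Your sketch of the ``only if'' direction, however, does not close. You write that a violated endpoint inequality would push one of the mixed values $\tilde\Delta(s_{-1},s_0,t_1)$ or $\tilde\Delta(s_0,t_1,t_2)$ strictly above the corresponding value along $(s_n)$ or $(t_n)$; but the latter is only known to be $\leq 0$, not $=0$, so ``strictly above'' produces no contradiction with $(\tilde s_n)$ being a super-solution. Your parenthetical qualifier ``(assumed by hypothesis or reduced to an equality case)'' is exactly where the argument would need an extra hypothesis the lemma does not provide. Moreover, the interior comparison $t_{\tilde n}\leq t_{\tilde n+1}$ cannot follow from the super-solution property at all: super-solutions are not in general monotone, and if $s_n=t_n$ for all $n$ the glued sequence is trivially a super-solution while the chained inequality reduces to $s_{\tilde n}\leq s_{\tilde n+1}$, which need not hold. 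So the ``only if'' claim (as stated with all four inequalities) is not recoverable by this route; worth noting, the paper itself proves and uses only the ``if'' direction, so you should drop the converse rather than try to patch it.
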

\begin{proof}
For simplicity of notation let use suppose that $\tilde{n}=0$. Let use prove it for super-solutions being the other case similar. Let us prove it by cases.\\
If $n\leq -1$ then $\tilde{\Delta}(\tilde{s}_{n-1},\tilde{s}_{n},\tilde{s}_{n+1})=\tilde{\Delta}(s_{n-1},s_{n},s_{n+1})\leq 0$.\\
If $n\geq 2$ then $\tilde{\Delta}(\tilde{s}_{n-1},\tilde{s}_{n},\tilde{s}_{n+1})=\tilde{\Delta}(t_{n-1},t_{n},t_{n+1})\leq 0$.\\  
Using the monotonicity property (a1) we can have also the following.\\ 
If $n=0$ then $\tilde{\Delta}(\tilde{s}_{-1},\tilde{s}_{0},\tilde{s}_{1})=\tilde{\Delta}(s_{-1},s_{0},t_{1})\leq \tilde{\Delta}(s_{-1},s_{0},s_{1})\leq 0$.\\
If $n=1$ then $\tilde{\Delta}(\tilde{s}_{0},\tilde{s}_{1},\tilde{s}_{2})=\tilde{\Delta}(s_{0},t_{1},t_{2})\leq \tilde{\Delta}(t_{0},t_{1},t_{2})\leq 0$.\\
form which the thesis.
\end{proof}
 
The following result by Angenent \cite[Theorem 7.1]{angenent} is the heart of our work.
\begin{teo}\label{ange}
Suppose that there exist a sub-solution ($\underline{t}_n$) and a super-solution ($\overline{t}_n$) such that
\begin{equation}\label{sub}
\limsup_{n\to +\infty}\frac{\underline{t}_n}{n}\leq\omega_0 \qquad \limsup_{n\to-\infty}\frac{\underline{t}_n}{n}\geq\omega_1
\end{equation}
and
\begin{equation}\label{super}
\liminf_{n\to-\infty}\frac{\overline{t}_n}{n}\leq\omega_0 \qquad \liminf_{n\to +\infty}\frac{\overline{t}_n}{n}\geq\omega_1
\end{equation}
for some $\omega_0<\omega_1$. 
Then there exist a compact $\mathcal{K}$ of the cylinder $\mathbb{T}\times\mathbb{R}$ and $Q\in\mathbb{N}$ such that $\tilde{P}^Q$ leaves $\mathcal{K}$ invariant and has a Bernoully shift as a factor when restricted to $\mathcal{K}$.    
\end{teo}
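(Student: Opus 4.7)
The plan is, for every bi-infinite code $\sigma\in\{0,1\}^{\mathbb{Z}}$, to build an $\tilde h$-stationary configuration $(t^\sigma_n)$ trapped between a compound sub-solution $\underline t^\sigma$ and a compound super-solution $\overline t^\sigma$ assembled from translates of $(\underline t_n)$ and $(\overline t_n)$, and to collect the resulting orbits of $\tilde P$ into a compact set $\mathcal{K}$ which, for a single common block length $Q$, is $\tilde P^Q$-invariant and admits a continuous surjection to $\{0,1\}^{\mathbb{Z}}$ conjugating $\tilde P^Q|_{\mathcal{K}}$ to the Bernoulli shift.

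\emph{Step 1 (translates and coding).} By (a2) every translate $(\underline t_{n-q}+p)$ is again a sub-solution, and similarly for super-solutions. Fix a rational $\omega^\star=p^\star/Q\in(\omega_0,\omega_1)$. The one-sided rotation hypotheses force the translates $(\underline t_{n-kQ}+kp^\star)$, for $Q$ large, to satisfy at every block boundary $n=kQ$ the interlacing inequality required by Lemma~\ref{glue}. For each $\sigma$, on the block $n\in[kQ,(k+1)Q]$ plug in either the original sub-solution (if $\sigma_k=0$) or the $k$-th translate (if $\sigma_k=1$); iterated gluing via Lemma~\ref{glue} produces a sub-solution $\underline t^\sigma$. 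The symmetric construction on $(\overline t_n)$, with the roles of the two slopes swapped, yields a super-solution $\overline t^\sigma\geq\underline t^\sigma$ everywhere.

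\emph{Step 2 (Perron's principle).} For any such ordered pair I minimize the finite-window action $\sum_{|n|\leq N}\tilde h(t_n,t_{n+1})$ over sequences constrained to $\underline t^\sigma_n\leq t_n\leq\overline t^\sigma_n$ with Dirichlet data at $\pm N$ taken from the barriers. Strict monotonicity (a1) of $\tilde\Delta$ forbids minimizers from sitting on the constraint boundary, so the minimizer is interior and therefore solves $\tilde\Delta(t_{n-1},t_n,t_{n+1})=0$ on $|n|<N$. Coercivity (a3) combined with Proposition~\ref{conti} and Corollary~\ref{stimm} bound the increments uniformly in $N$ and $\sigma$, so a diagonal extraction as $N\to\infty$ yields the desired $\tilde h$-stationary configuration $(t^\sigma_n)$ still inside the strip. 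Setting $E^\sigma_n=\partial_1\tilde h(t^\sigma_n,t^\sigma_{n+1})$ gives a full orbit of $\tilde P$.

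\emph{Step 3 (invariant set and semi-conjugacy).} By the uniform increment bound, all pairs $(t^\sigma_0\bmod\mathbb{Z},E^\sigma_0)$ lie in a common compact set $\mathcal{K}_0\subset\mathbb{T}\times\mathbb{R}$; put $\mathcal{K}=\overline{\mathcal{K}_0}$. The block construction is equivariant: $(t^{\tau\sigma}_n)$ differs from $(t^\sigma_{n+Q})$ by a fixed integer in the $t$-variable, so $\tilde P^Q(\mathcal{K}_0)\subseteq\mathcal{K}_0$ and hence $\tilde P^Q(\mathcal{K})\subseteq\mathcal{K}$. The labeling map $\pi:\mathcal{K}_0\to\{0,1\}^{\mathbb{Z}}$ sending an orbit to its code intertwines $\tilde P^Q$ with the shift $\tau$, is surjective by construction, and is continuous (distinct codes are forced apart on at least one block by the translate-separation from Step~1); the uniform increments let it extend continuously to $\mathcal{K}$. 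This gives the Bernoulli shift as a factor of $\tilde P^Q|_{\mathcal{K}}$. The main obstacle is Step~2: one must simultaneously (i) keep the finite-window minimizers strictly off the constraint boundary, so the Euler--Lagrange equation $\tilde\Delta=0$ holds everywhere in the window, and (ii) ensure the $N\to\infty$ limit does not collapse to $\underline t^\sigma$ or $\overline t^\sigma$ but genuinely depends on $\sigma$ in a way uniform enough to make $\pi$ nondegenerate. Point (i) is driven by strict (a1); point (ii) requires the ``room'' built into Step~1, namely that in each block the gluing leaves a uniform gap between $\underline t^\sigma$ and $\overline t^\sigma$ whose position depends on $\sigma_k$, so distinct codes produce uniformly separated stationary configurations.
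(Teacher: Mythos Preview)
The paper does not prove Theorem~\ref{ange}; it quotes it from Angenent \cite[Theorem~7.1]{angenent}. What the paper does do, in Section~\ref{prof}, is rehearse Angenent's construction in order to extract quantitative control on where $\mathcal{K}$ sits. So the relevant comparison is between your sketch and Angenent's argument as relayed there.

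Your overall architecture (coded barriers, stationary configuration between them, semi-conjugacy from the coding) matches Angenent's. The substantive difference is in how the coded sub- and super-solutions are built. Angenent does \emph{not} glue translates via Lemma~\ref{glue}. Instead he first regularizes: he sets $\underline{w}=\sup\{T_{q,p}(\underline t):p\leq z(q)\}$ and $\overline{w}=\inf\{T_{q,p}(\overline t):p\geq Z(q)\}$, obtaining sub- and super-solutions that track the piecewise-linear profiles $z,Z$ within a uniform constant $M$. Only then, for each code $e$, does he take $\underline{x}_e=\sup_j T_{jQ,\zeta_e(jQ)}(\underline w)$ and $\overline{x}_e=\inf_j T_{jQ,\zeta_e(jQ)}(\overline w)$, and finally invoke the ordered-barrier existence theorem \cite[Theorem~4.2]{angenent} to get a stationary configuration in the strip $[\underline{x}_e-M,\overline{x}_e+M]$. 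The sup/inf device is the point: a supremum of sub-solutions is a sub-solution, and the resulting objects automatically have the translation-monotonicity needed for the later disjointness and continuity arguments.

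Your Step~1 tries to bypass this by applying Lemma~\ref{glue} directly to translates of the raw $(\underline t_n)$ and $(\overline t_n)$. That is a genuine gap. Lemma~\ref{glue} requires, at each splice point, the ordering $t_{\tilde n}\leq s_{\tilde n}\leq s_{\tilde n+1}\leq t_{\tilde n+1}$ (for sub-solutions), in particular monotonicity $s_{\tilde n}\leq s_{\tilde n+1}$ of the pieces being spliced. The hypotheses on $(\underline t_n),(\overline t_n)$ are only asymptotic $\limsup/\liminf$ statements; nothing prevents local non-monotonicity or large oscillations, so the interlacing you claim ``for $Q$ large'' need not hold at every block boundary. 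The sup/inf regularization is exactly what repairs this. Separately, your appeal to Proposition~\ref{conti} and Corollary~\ref{stimm} in Step~2 is misplaced: those bounds are specific to the bouncing-ball map $\tilde P$ and are used in the paper only \emph{after} Theorem~\ref{ange}, to pin down where $\mathcal{K}$ lies; they play no role in the existence of $\mathcal{K}$ itself, which in Angenent's scheme comes from the uniform $M$-closeness of the barriers to $\zeta_e$.
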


Note that this theorem does not give any information on the location of the set $\mathcal{K}$. It means that, in principle, the dynamics could live in the region where $P$ and $\tilde{P}$ (or equivalently $h$ and $\tilde{h}$) does not coincide and this is meaningless for our problem. Anyway we are going to choose suitable sub and super solutions that allow to reach this result.
It is important, in order to avoid confusions, to stress the fact that we are going to construct suitable sub and super solutions for the recurrent relation corresponding to the modified generating function $\tilde{h}$. To this purpose we remember that if $t_1-t_0>K$ then $\tilde{h}=h$.\\  
Let us start with the construction. It comes from the hypothesis (\ref{hpf}) that there exist $t_0^*$ and $t_0^\sharp$ such that
$$
2\dot{f}(t_0^*)=g\quad\mbox{and}\quad2\dot{f}(t_0^\sharp)=-g.
$$
Moreover, there exists $\bar{t}_0$ such that $\dot{f}(\bar{t}_0)=0$. To fix the ideas, by the periodicity of $f$ it is not restrictive to suppose that $\bar{t}_0=0$ and
\begin{equation}\label{ord}
-1\leq t_0^\sharp<0<t_0^*\leq 1.
\end{equation}
We just have to consider the translate $f(t-\bar{t}_0)$ instead of $f$. Moreover, if $t_0^\sharp$ and $t_0^*$ do not respect the order given in (\ref{ord}), by the $1$-periodicity of $f$, one can change $t_0^\sharp$ with $t_0^\sharp\pm 1$ or $t_0^*$ with $t_0^*\pm 1$ depending on the case. These changes will not let us lose the generality of the argumentation.\\  
We start constructing some orbits of $\tilde{P}$. 
\begin{prop}\label{mat}
\begin{enumerate}
\item For every integer $m$ sufficiently large the orbit of $\tilde{P}$ with initial condition $t_0=0$ and $v_0=\frac{gm}{2}$ satisfies
$$
t_{n+1}-t_n=m
$$ 
for $n\in\mathbb{Z}$. 
\item If there exists $t_0^*$ such that $2\dot{f}(t_0^*)=g$ then for every $m\in\mathbb{N}$ sufficiently large there exists an accelerating orbit $(t_n^*,v^*_n)_{n>0}$ of $\tilde{P}$ such that 
$$
t_{n+1}^*-t_n^*=m+2n
$$
Note that 
$$
t_n^*=t_0^*+n^2+(m-1)n.
$$ 
\item If there exists $t_0^\sharp$ such that $2\dot{f}(t_0^\sharp)=-g$ then for every $r\in\mathbb{N}$ sufficiently large there exists a decelerating partial orbit $(t_n^\sharp,v^\sharp_n)_{0\leq n <N}$ of $\tilde{P}$ such that
$$
t_{n+1}^\sharp-t_n^\sharp=r-2n
$$
and
$$
N=[\frac{r-K}{2}]
$$
where $[x]$ denotes the integer part of $x$.
Note that
$$
t_n^\sharp=t_0^\sharp-n^2+(r+1)n.
$$
\end{enumerate}
\end{prop}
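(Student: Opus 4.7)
The plan is to verify each of the three items by making an explicit ansatz for both the impact times and the post-impact velocities, and then checking by direct substitution into the map $P$ (which, for $m$ or $r$ large enough, coincides with $\tilde{P}$ on the relevant portion of the cylinder) that the ansatz is preserved under one iteration; induction on $n$ finishes. The only genuine content is to exploit the periodicity of $f$ and the special choice of the base points $0,\,t_0^*,\,t_0^\sharp$ at which $\dot f$ takes the values $0,\,g/2,\,-g/2$.

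For item (1), I would set $t_0=0$, $v_0=gm/2$ with $m$ so large that $v_0>\bar v$, and compute $t_1$ and $v_1$ from (\ref{mapp}). Since $f(m)=f(0)$ by $1$-periodicity, the divided difference $f[t_1,t_0]$ vanishes when $t_1-t_0=m$, and $\dot f(0)=0$ by our choice of $\bar t_0$; the map equation $t_1=(2/g)v_0+1\cdot 0$ then yields $t_1=m$ consistently, and $v_1=v_0+\dot f(m)+\dot f(0)-0=v_0$. Thus $(0,gm/2)$ is a fixed point on the cylinder and the formula $t_{n+1}-t_n=m$ holds for all $n\in\mathbb Z$.

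For item (2), I would postulate $t_n^*=t_0^*+n^2+(m-1)n$ and $v_n^*=g(m+2n-1)/2$, and check the induction step. Because $n^2+(m-1)n\in\mathbb Z$, every $t_n^*$ is congruent to $t_0^*$ modulo $1$, so $\dot f(t_n^*)=g/2$ and $f[t_{n+1}^*,t_n^*]=0$. Substituting into (\ref{mapp}) gives $t_{n+1}^*-t_n^*=(2/g)v_n^*+1=m+2n$ and $v_{n+1}^*=v_n^*+g$, which are exactly the ansatz values at $n+1$. For $m$ large enough the smallest velocity $v_0^*=g(m-1)/2$ exceeds $\bar v$, and subsequent velocities only increase, so the entire forward orbit lives in the regime where $P=\tilde P$; by telescoping we also recover the closed form $t_n^*=t_0^*+n^2+(m-1)n$. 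Item (3) is handled in exactly the same way with the ansatz $t_n^\sharp=t_0^\sharp-n^2+(r+1)n$ and $v_n^\sharp=g(r-2n+1)/2$, the sign flip being produced by $\dot f(t_n^\sharp)=-g/2$.

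The main obstacle, and the only nontrivial point, is in item (3): since the velocities now decrease by $g$ at each step, at some index $n$ the gap $t_{n+1}^\sharp-t_n^\sharp=r-2n$ drops below $K$ and the orbit exits the region $\Sigma_+$ where $\tilde P$ coincides with the original physical map $P$. The closed-form computation we just did is valid only as long as we remain in $\Sigma_+$, i.e.\ only as long as $r-2n>K$, which is exactly $n<(r-K)/2$; this forces the partial-orbit length $N=[(r-K)/2]$ in the statement. I expect this to be the only delicate bookkeeping: for $n<N$ the previous induction goes through verbatim, and for $n\geq N$ the orbit under $\tilde P$ continues but is no longer described by the simple quadratic formula, which is precisely why item (3) is stated only as a partial orbit.
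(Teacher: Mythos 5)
Your proof is correct and follows essentially the same strategy as the paper's (which outsources the computations to Pustyl'nikov): take the special base times where $\dot f$ equals $0$, $g/2$, $-g/2$, exploit $1$-periodicity so that integer gaps force $f[t_{n+1},t_n]=0$ and $\dot f(t_n)=\dot f(t_0)$, and verify the quadratic ansatz by induction; you also correctly identify the cutoff $N=[(r-K)/2]$ in item (3) as coming from the exit of the orbit from the region $t_{n+1}-t_n>K$ where $P$ and $\tilde P$ agree. Your version simply makes explicit the velocity formulas $v_n^*=g(m+2n-1)/2$ and $v_n^\sharp=g(r-2n+1)/2$ that the paper leaves implicit.
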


\begin{proof}
To construct orbits of $\tilde{P}$ it is sufficient to construct orbits of $P$ and check that the condition 
\begin{equation}\label{goo}
t_{n+1}-t_n>K
\end{equation} 
is verified for every $n$. The map $P$ is expressed in the variables $(t_n,E_n)$. Performing the change of variable $E_n=\frac{1}{2}v_n^2$ we can express it in the original variables $(t_n,v_n)$ without changing the sequence of impact times. 
Now we can start the proof.
\begin{enumerate}
\item Remembering that we can suppose $\dot{f}(0)=0$, it comes from (\ref{mapp}) choosing $m$ large enough such that $gm>2\bar{v}$. 
\item As in \cite{pustsoviet} one just has to choose $t_0=t_0^*$ and $v_0=\frac{g(m-1)}{2}$ with the same condition as before.
\item Using the same idea as in \cite{pustsoviet}, we choose $t_0=t_0^\sharp$ and $v_0=\frac{g(r+1)}{2}$ with $r$ sufficiently large.  Note that to ensure that condition (\ref{goo}) is verified we can only consider a finite segment of orbit, stopping at $n=[\frac{r-K}{2}]$.  
\end{enumerate} 
\end{proof}

Now we have all the ingredients to construct the sub and the super solutions.\\ 
Let us construct first the super-solution.
By Proposition \ref{mat}, point $2$, we have the existence of an unbounded orbit $(t_n^*,E_n^*)$ such that
$$
t_n^*=t_0^*+n^2+(m-1)n
$$  
where $m$ is a positive integer. Here, $m$ is arbitrary and sufficiently large. From now on we will suppose that at least 
\begin{equation}\label{mmini}
m>\max\{\frac{2\bar{v}}{g},2K+9\}.
\end{equation}
Being $m$ sufficiently large, it comes from Proposition \ref{mat}, point $1$, that we can choose two initial energies $E_0^-$ and $E_0^+$ that give rise to two periodic orbits $(t_n^-,E_n^-)$ and $(t_n^+,E_n^+)$ such that
\begin{equation*}
\begin{split}
t_n^-&=(m+1)n\\
t_n^+&=(m+2)n
\end{split}
\end{equation*}
By the proof of Proposition \ref{mat} we have $t_0^+=t^-_0=0$.
We are going to construct the super-solution considered in Theorem \ref{ange} in which $\omega_0=m+1$, $\omega_1=m+2$ and the limits exist. To construct it, first note that    
$$
t_{0}^- \leq t^*_{0}  < t^*_{1}\leq t_{1}^-.
$$
Moreover, it is easy to see that there exists $n_+$ such that
\begin{equation}\label{wer}
t_{n_+}^* \leq t^+_{n_+}  < t^+_{n_++1}\leq t_{n_++1}^*.
\end{equation} 
It comes from the fact that $t^+_{n+1}-t_n^+=m+2$ for every $n$ while $t^*_{n+1}-t_n^*=m+2n$: let $\tilde{n}$ be the first integer such that $t^*_{\tilde{n}+1}-t_{\tilde{n}}^*>2(m+2)$. If (\ref{wer}) is not satisfied for $n_+=\tilde{n}$, it will be satisfied for $n_+=\tilde{n}+1$.   
Define the sequence $(\overline{t}_n)$ as
\begin{equation*}
\overline{t}_n=
\begin{sistema}
t^-_{n}  \quad\mbox{if  } n\leq 0   \\
t^*_n            \qquad\mbox{if  } 0<n\leq n_+ \\
t^+_{n}  \quad\mbox{if  } n>n_+.
\end{sistema}
\end{equation*}

Applying twice lemma \ref{glue} we have that $(\overline{t}_n)$ is a super-solution. 
Moreover, for large values of $n$, the sequence $(\overline{t}_n)$ coincide either with $(t^+_n)$ or $(t^-_n)$ so that $(\overline{t}_n)$ satisfies the inequalities (\ref{super}). In fact we have more: the inequalities are equalities and the limits exist. \\
Now, using the same idea let us construct the sub-solution $(\underline{t}_n)$. In Proposition \ref{mat}, point $3$, choose $r=m+3$ so that we have   
$$
t_n^\sharp=t_0^\sharp-n^2+(m+4)n
$$
and
\begin{equation}\label{diff}
t^\sharp_{n+1}-t_n^\sharp=m+3-2n.
\end{equation}
Condition (\ref{mmini}) ensure that $r$ is big enough to apply Proposition \ref{mat}.
It is easy to verify that
$$
t_{0}^\sharp \leq t^+_{0}  < t^+_{1}\leq t_{1}^\sharp.
$$
Moreover, we claim that there exists $n_-$ such that
\begin{equation}\label{otto}
t_{n_-}^- \leq t^\sharp_{n_-}  < t^\sharp_{n_-+1}\leq t_{n_-+1}^-.
\end{equation}
To prove it, note that the difference $t^\sharp_{n+1}-t_n^\sharp$ is decreasing in $n$. So let $\tilde{n}_-$ be the first integer such that $t^\sharp_{\tilde{n}_-+1}-t_{\tilde{n}_-}^\sharp<\frac{m+1}{2}$. Remembering that $t^-_{n+1}-t_n^-=m+1$, if (\ref{otto}) is not satisfied for $n_-=\tilde{n}_-$ it will be satisfied for $n_-=\tilde{n}_-+1$. Proposition \ref{mat} states that the sequence $(t_n^\sharp)$ is defined for $0\leq n< N=[\frac{m+3-K}{2}]$ (remember that $r=m+3$). So, for a good definition we have to check that $n_- < N$. We can achieve this, if, for example, 
\begin{equation}\label{sopra}
0\leq n_-< \frac{m+3-K}{2}-1=\frac{m+1-K}{2}     
\end{equation}
Combining the definition of $n_-$ with (\ref{diff}) we have that
$$
m+3-2n_-<\frac{m+1}{2}
$$
from which the condition
\begin{equation}\label{sotto}
n_->\frac{m+5}{4}
\end{equation} 
must hold.
Combining (\ref{sopra}) and (\ref{sotto}) we are done if we can find an integer $n_-$ such that
$$
\frac{m+5}{4}<n_-<\frac{m+1-K}{2}.
$$
Condition (\ref{mmini}) implies that 
$$
\frac{m+1-K}{2}-\frac{m+5}{4}>1
$$
so that we can find the requested integer $n_-$.\\   
Now it is well defined the sequence
\begin{equation*}
\underline{t}_n=
\begin{sistema}
t^+_{n}  \quad\mbox{if  } n\leq 0   \\
t^\sharp_n            \qquad\mbox{if  } 0<n\leq n_- \\
t^-_{n}  \quad\mbox{if  } n>n_-.
\end{sistema}
\end{equation*}
Applying twice lemma \ref{glue} we have that $(\underline{t}_n)$ is a sub-solution. Also in this case we have that the inequalities (\ref{sub}) are equalities and the limits exist.\\

\section{Proof of Theorem \ref{maint}}\label{prof} 
The main part of the proof of Theorem \ref{maint} consists of proving the existence of one of the sets $\mathcal{K}_i$. The existence of the entire sequence will be a simple consequences of this proof.

Applying Theorem \ref{ange} we have chaos for the modified system but we want it to be confined in the unmodified region. The sub and the super-solution we constructed depend on $m$. It is an arbitrary parameter and we will see that it can be taken in a manner such that the corresponding orbit stays in the desired region. To achieve it, we repeat the proof of Angenent \cite{angenent} of Theorem \ref{ange} modifying the parts that will lead to our aim. Angenent idea is to start with the subsolution $(\underline{t}_n)$ and the supersolution $(\overline{t}_n)$ to build two ordered sub and super solutions with certain properties. Between two ordered sub and super solutions there exists at least a $\tilde{h}$-stationary configuration and from this one can build the set $\mathcal{K}$. Now let us enter into the details.

Define the functions
\begin{equation*}
\begin{split}
z(\kappa)&=(m+1)\kappa_+-(m+2)\kappa_-  \\
Z(\kappa)&=(m+2)\kappa_+-(m+1)\kappa_-
\end{split}
\end{equation*}
where $\kappa_+=\max(\kappa,0)$ and $\kappa_-=\max(-\kappa,0)$. 
\begin{lemma}\label{emme}
There exists $M$ independent on $m$ such that for every $n$
\begin{equation*}
\begin{split}
\overline{t}_n &\geq Z(n)-M \\
\underline{t}_n &\leq z(n)+M.
\end{split}
\end{equation*}
(Here, the notation can be misleading: the sequences $(\overline{t}_n)_{n\in\mathbb{Z}}$ and $(\underline{t}_n)_{n\in\mathbb{Z}}$ depend on $m$).
\end{lemma}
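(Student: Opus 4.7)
The plan is to verify the two inequalities separately by splitting on the three pieces of the definition of $\overline{t}_n$ (respectively $\underline{t}_n$) and observing that the $m$-dependent part cancels in the key ``parabolic'' middle piece.

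First I would record explicit formulas for $z(n)$ and $Z(n)$: for $n\geq 0$ one has $z(n)=(m+1)n$ and $Z(n)=(m+2)n$, while for $n\leq 0$ one has $z(n)=(m+2)n$ and $Z(n)=(m+1)n$. This immediately disposes of the two ``tail'' regions, since by construction $\overline{t}_n=t_n^{-}=(m+1)n$ for $n\leq 0$ and $\overline{t}_n=t_n^{+}=(m+2)n$ for $n>n_+$, and analogously $\underline{t}_n=t_n^{+}=(m+2)n$ for $n\leq 0$ and $\underline{t}_n=t_n^{-}=(m+1)n$ for $n>n_-$. In both cases the difference with $Z(n)$ (resp.\ $z(n)$) is exactly zero, so these pieces contribute nothing to $M$.

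The heart is the middle piece, where the sequences are parabolic. For $0<n\leq n_+$ the super-solution satisfies
$$
\overline{t}_n-Z(n)=\bigl(t_0^{*}+n^{2}+(m-1)n\bigr)-(m+2)n=t_0^{*}+n^{2}-3n,
$$
so the linear term proportional to $m$ cancels, and what remains is a quadratic in $n$ with minimum $\geq t_0^{*}-\tfrac{9}{4}$. This gives $\overline{t}_n\geq Z(n)-M$ with, say, $M=|t_0^{*}|+3$, which is independent of $m$. A completely parallel computation for the sub-solution on $0<n\leq n_-$ yields
$$
\underline{t}_n-z(n)=\bigl(t_0^{\sharp}-n^{2}+(m+4)n\bigr)-(m+1)n=t_0^{\sharp}-n^{2}+3n,
$$
again free of $m$, and bounded above by $t_0^{\sharp}+\tfrac{9}{4}$. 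Taking the maximum of the two bounds produces a single constant $M=\max(|t_0^{*}|,|t_0^{\sharp}|)+3$ that works for both inequalities and for all $n\in\mathbb{Z}$.

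There is essentially no obstacle: the whole point is the algebraic identity that the slopes $m-1$ in $t_n^*$ and $m+4$ in $t_n^\sharp$ are designed so that, against the slopes $m+2$ and $m+1$ appearing in $Z$ and $z$, the remainders $n^{2}-3n$ and $-n^{2}+3n$ have no dependence on $m$. The only mild point to double-check is that the gluing indices $n_+$ and $n_-$ (which do depend on $m$) do not enter the bound; but because the tail pieces match $Z$ and $z$ exactly, the value of the constant $M$ is controlled only by the fixed quantities $t_0^{*},t_0^{\sharp}$ and a universal quadratic extremum, so $M$ is indeed independent of $m$.
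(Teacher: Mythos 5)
Your proof is correct and follows essentially the same route as the paper: split into the three pieces of the gluing, note that the two tail pieces match $Z(n)$ (resp.\ $z(n)$) exactly, and on the middle parabolic piece observe that the $m$-dependence cancels, leaving $n^2-3n+t_0^*$ (resp.\ $-n^2+3n+t_0^\sharp$), whose extremum $\mp 9/4$ over $n\in\mathbb{R}$ gives an $m$-independent bound. The paper records the same extremal value $\tfrac{9}{4}\mp t_0^{*/\sharp}$ ``by differentiating,'' so the content is identical; you merely make the algebraic cancellation explicit and add the (correct) remark that the $m$-dependence of $n_\pm$ is harmless because the constant is computed over the real line rather than over the range $0<n\le n_\pm$.
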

\begin{proof}
First of all note that for $n\leq 0$ or $n>n_+$, the super-solution $\overline{t}_n$ and $Z(n)$ coincide, hence consider
$$
M_1>\max_{0<n\leq n_+}(Z(n)-\overline{t}_n) = \frac{9}{4}-t_0^*
$$
where we computed the maximum differentiating. Analogously we have
$$
M_2>\max_{0<n\leq n_-}(\underline{t}_n-z(n))=\frac{9}{4}+t_0^\sharp.
$$
Finally we have the thesis choosing $M=\max\{M_1,M_2\}$.
\end{proof}
Following Angenent \cite[p. 26]{angenent}, consider the new sub and super solutions
\begin{equation*}
\begin{split}
\underline{w}=&\sup\{T_{q,p}(\underline{t}):p\leq z(q)  \} \\
\overline{w}= &\inf\{T_{q,p}(\overline{t}):p\geq Z(q)  \}
\end{split}
\end{equation*}
where $(T_{q,p}(t))_n=t_{n-q}+p$. On the lines of Angenent, one can prove that for all $n$
\begin{equation}\label{frt}
\begin{split}
|\underline{w}_n-z(n)|&\leq M \\
|\overline{w}_n-Z(n)|&\leq M.
\end{split}
\end{equation}
Note that maybe that one has to increase $M$.
So far we have not defined the constant $m$ requiring only it to be large. Now we are going to fix it through the following technical lemma
\begin{lemma}\label{fondamentale}
There exist two large positive integers $m$ and $Q$ satisfying the following system of inequalities:

\begin{subequations}
   \label{sistema}
   
   \begin{eqnarray}
      \label{eq1}
      &Q>8M \\      
      \label{eq2}
      &Q(m+1)-4M>QK+\frac{Q(Q-1)}{2}C\\
      \label{eq3}
      &Q(m+2)-4M>QK+\frac{Q(Q-1)}{2}C\\
      \label{eq4}
      &QK+\frac{Q(Q-1)}{2}C>0 
   \end{eqnarray}
   \end{subequations}

where $M$ comes from (\ref{frt}), $C$ comes from Proposition \ref{conti} and $K$ comes from the definition of $\tilde{h}$.  
\end{lemma}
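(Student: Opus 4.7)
The plan is to decouple the four inequalities by choosing $Q$ first based on conditions that are independent of $m$, and then choosing $m$ large depending on the now-fixed value of $Q$.

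First I would dispose of inequality (\ref{eq4}), which is essentially free. Since $K \geq K_1 > 0$ (from the very definition of the generating function $h$), since $C > 0$ (from Proposition \ref{conti}), and since $Q \geq 1$, we have
$$
QK + \frac{Q(Q-1)}{2}C \;\geq\; QK \;>\; 0,
$$
so (\ref{eq4}) holds automatically for every positive integer $Q$.

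Next, note that inequality (\ref{eq1}) is a condition on $Q$ alone and places no restriction whatsoever on $m$. So I would pick any integer $Q > 8M$ and fix it. At this stage $Q$, $M$, $C$, $K$ are all constants; rewriting (\ref{eq2}) and (\ref{eq3}) as lower bounds on $m$ gives
$$
m \;>\; K + \frac{(Q-1)C}{2} + \frac{4M}{Q} - 1 \qquad \text{from (\ref{eq2})},
$$
$$
m \;>\; K + \frac{(Q-1)C}{2} + \frac{4M}{Q} - 2 \qquad \text{from (\ref{eq3})},
$$
the first being the more restrictive of the two. I would then choose any integer $m$ exceeding the maximum of this bound and the earlier bound (\ref{mmini}) used in the construction of the sub- and super-solutions, namely $m > \max\{2\bar{v}/g,\, 2K+9\}$. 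Such an integer exists because all four bounds involve only constants that do not depend on $m$, and the pair $(m,Q)$ so produced satisfies the whole system.

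There is really no obstacle here: the only subtle point is the order of quantifiers. Because (\ref{eq2}) and (\ref{eq3}) grow linearly in $m$ with slope $Q > 0$ while the right-hand side is independent of $m$, any choice of $Q$ meeting (\ref{eq1}) and (\ref{eq4}) can be completed to a valid pair by taking $m$ large. One must be careful not to try to fix $m$ first, since then the quadratic term $\frac{Q(Q-1)}{2}C$ on the right of (\ref{eq2})--(\ref{eq3}) would eventually defeat the linear growth in $Q$ of the left-hand side; choosing $Q$ first and $m$ second avoids this.
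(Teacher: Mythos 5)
Your proof is correct, but it takes a slightly different route from the paper's. The paper's own argument is a one-liner: choose $m=Q^2$ with $Q$ sufficiently large. Under that coupling the left-hand sides of (\ref{eq2})--(\ref{eq3}) are of order $Q^3$ while the right-hand sides are of order $Q^2$, so the inequalities hold for $Q$ large, and (\ref{eq1}), (\ref{eq4}) are immediate. You instead decouple the two parameters, fixing $Q$ once and for all via (\ref{eq1}) and then sending $m$ to infinity to defeat (\ref{eq2})--(\ref{eq3}); this works because $M$, $C$, $K$ are all independent of both $m$ and $Q$ (a fact worth stating explicitly, as it is the crux of the decoupling). Both establish the lemma, and your remark on the order of quantifiers (that fixing $m$ first would fail, since $\frac{Q(Q-1)}{2}C$ eventually overtakes the linear term $Q(m+1)$) is a genuine observation the paper leaves implicit. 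One caveat for the last step of Theorem \ref{maint}: the final paragraph of Section \ref{prof} concludes by ``letting $m$ (or equivalently $Q$) tend to $+\infty$'' to obtain infinitely many disjoint $\mathcal{K}_i$. With the paper's coupling $m=Q^2$ both parameters grow together and the $Q_i$ are automatically distinct; with your decoupled choice $Q$ stays fixed while $m\to\infty$, which still produces infinitely many disjoint $\mathcal{K}_i$ (the bands $n(m+1)-2M\leq x_n^*\leq n(m+2)+2M$ separate for different large $m$), and a constant sequence $Q_i\equiv Q$ is admissible in the theorem statement, but the paper's coupled choice aligns more naturally with its own later phrasing.
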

\begin{proof}
It is not difficult to see that it can be simply satisfied choosing $m=Q^2$ and $Q$ sufficiently large. 
\end{proof}

From now on the constants $Q$ and $m$ are fixed by lemma \ref{fondamentale}. We are going to prove that they are suitable to our purposes.\\

Let $e=(e_n)_{n\in\mathbb{Z}}$ be a sequence such that $e_n\in\{0,1\}$ for every $n\in\mathbb{Z}$. Given such a sequence define two functions $\chi_e(\kappa)$ and $\zeta_e(\kappa)$ in the following way. On the interval $nQ\leq \kappa < (n+1)Q$, with $Q$ coming from lemma \ref{fondamentale}, we define
$$
\chi_e(\kappa)=m+1+e_n
$$   
that is $m+2$ if $e_n=1$, and $m+1$ if $e_n=0$. The other function is given by
$$
\zeta_e(\kappa)=\int_0^\kappa\chi_e(s)ds.
$$
We have that for any integer $j$, $\zeta_e(jQ)$ is an integer so that we can define the new sub and super solutions
\begin{equation*}
\begin{split}
\underline{x}_e=&\sup\{T_{jQ,\zeta_e(jQ)}(\underline{w}):j\in\mathbb{Z}  \} \\
\overline{x}_e= &\inf\{T_{jQ,\zeta_e(jQ)}(\overline{w}):j\in\mathbb{Z}  \}.
\end{split}
\end{equation*}
One can prove that for all $n$
\begin{equation}\label{estizeta}
\begin{split}
|\underline{x}_{e,n}-\zeta_e(n)|&\leq M \\
|\overline{x}_{e,n}-\zeta_e(n)|&\leq M.
\end{split}
\end{equation}
The proof of this fact is the same as in \cite[Proposition 7.2]{angenent}. In our particular case the constants $\rho_0$ and $\rho_1$ of Angenent take the values $\rho_0=m+1$ and $\rho_1=m+2$.\\
It is easy to see that $\underline{x}_e-M$ and $\overline{x}_e+M$ are sub and super solutions such that $\underline{x}_e-M\leq \overline{x}_e+M$. So, using \cite[Theorem 4.2]{angenent}, there exists at least one $\tilde{h}$-stationary configuration $x^*=(x^*_n)$ such that for every $n$,
\begin{equation}\label{pizz}
\underline{x}_{e,n}-M\leq x^*_n \leq \overline{x}_{e,n}+M.
\end{equation}
From the $\tilde{h}$-stationary configuration $x^*$ we can construct an orbit of the modified map $\tilde{P}$. Anyway, we are going to prove that every $\tilde{h}$-stationary configuration $x$ satisfying condition (\ref{pizz}) is such that  $x_{n+1}-x_{n}>K$ for every $n$ so that it is an $h$-stationary configuration as well. From this latter configuration one can construct an orbit of the original map $P$. In fact we have

\begin{lemma}\label{dove}  
Let $x$ be a $\tilde{h}$-stationary configuration satisfying condition (\ref{pizz}). We have that
$$
x_{n+1}-x_{n}>K
$$
for every $n\in\mathbb{Z}$. 
\end{lemma}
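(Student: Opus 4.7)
The plan is to combine an a priori comparison between $x_n$ and the piecewise-linear function $\zeta_e(n)$ (coming from (\ref{pizz}) together with (\ref{estizeta})) with the ``averaging'' estimate supplied by Corollary \ref{stimm}, and then to cash in the numerical inequalities in Lemma \ref{fondamentale}.

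First I would observe that inserting (\ref{estizeta}) into (\ref{pizz}) and applying the triangle inequality yields
$$|x_n - \zeta_e(n)| \leq 2M \qquad \text{for every } n\in\mathbb{Z}.$$
Since $\chi_e \geq m+1$, one has $\zeta_e(N+Q) - \zeta_e(N) \geq Q(m+1)$ for every $N\in\mathbb{Z}$, and therefore
$$x_{N+Q} - x_N \geq Q(m+1) - 4M.$$
By condition (\ref{eq2}) of Lemma \ref{fondamentale}, the right-hand side is strictly larger than $QK + \frac{Q(Q-1)}{2}C$, which by (\ref{eq4}) is positive.

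Next, the $\tilde{h}$-stationary condition $\tilde{\Delta}(t_{n-1},t_n,t_{n+1})=0$ is invariant under index translation, so each shifted sequence $(x_{n+N})_{n\in\mathbb{Z}}$ is again $\tilde{h}$-stationary. Applied to such a shift, Corollary \ref{stimm} gives, for every $0\leq i<Q$,
$$|x_{N+Q} - x_N| \leq Q\,|x_{N+i+1} - x_{N+i}| + \frac{Q(Q-1)}{2}C.$$
Given $n\in\mathbb{Z}$ arbitrary, I would choose $N\in\mathbb{Z}$ and $i\in\{0,\dots,Q-1\}$ with $n=N+i$; combining with the lower bound from the first step,
$$Q\,|x_{n+1}-x_n| \;\geq\; x_{N+Q}-x_N - \frac{Q(Q-1)}{2}C \;>\; QK,$$
hence $|x_{n+1}-x_n| > K$.

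Finally, to pin down the sign I use the a priori estimate locally: $x_{n+1}-x_n \geq \chi_e(n) - 4M \geq (m+1) - 4M$, and the choice $m = Q^2$ with $Q > 8M$ coming from the proof of Lemma \ref{fondamentale} makes this quantity strictly positive. Combined with the previous step this gives $x_{n+1}-x_n > K$, completing the proof. The only real ``obstacle'' here is bookkeeping: the inequalities of Lemma \ref{fondamentale} have been engineered exactly so that the constants $QK$, $\frac{Q(Q-1)}{2}C$ and $4M$ combine in the required way, so no additional idea is needed beyond the two ingredients already built.
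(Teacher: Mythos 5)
Your proposal is correct and uses essentially the same ingredients as the paper: Corollary \ref{stimm}, the sandwich estimates (\ref{pizz}) and (\ref{estizeta}) via $\zeta_e$, and the numerical inequalities of Lemma \ref{fondamentale}. The paper argues by contradiction (assume $x_{j+1}-x_j\leq K$, apply Corollary \ref{stimm} to get an upper bound on $|x_Q-x_0|$, then contradict it with the lower bound $x_{(n+1)Q}-x_{nQ}>QK+\frac{Q(Q-1)}{2}C$), whereas you run the same comparison directly and for an arbitrary $n$ by shifting the index. One small thing you do that the paper leaves implicit: you establish $x_{n+1}-x_n\geq\chi_e(n)-4M\geq (m+1)-4M>0$ before invoking Corollary \ref{stimm}. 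This is actually needed, since the contradiction hypothesis $x_{j+1}-x_j\leq K$ only bounds $|x_{j+1}-x_j|$ by $K$ once the difference is known to be non-negative, and Corollary \ref{stimm} is stated in terms of $|t_{j+1}-t_j|$; so your explicit sign argument closes a minor gap in the paper's write-up. In short: same proof strategy, slightly more careful bookkeeping on your side.
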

\begin{proof}
Suppose by contradiction that there exists $j$ such that $x_{j+1}-x_{j}\leq K$. First of all there exists $\bar{n}$ such that $\bar{n}Q\leq j < (\bar{n}+1)Q$ but, for simplicity of notation we will suppose that $\bar{n}=0$. Applying Corollary \ref{stimm} we have that the condition
\begin{equation}\label{contra}
|x_Q-x_0|\leq KQ+\frac{Q(Q-1)}{2}C
\end{equation} 
must hold. We will see that the choice of the constants coming from lemma \ref{fondamentale} prevents condition (\ref{contra}) from being satisfied. In fact we are going to prove that the condition 
\begin{equation}\label{tre}
x_{(n+1)Q}-x_{nQ}>QK+\frac{Q(Q-1)}{2}C
\end{equation}
holds for every $n\in\mathbb{Z}$.    
To prove it we need the following estimate on the sub and the super-solution that holds for every $n$
$$
[\underline{x}_{e,(n+1)Q}-M] - [\overline{x}_{e,nQ}+M]>QK+\frac{Q(Q-1)}{2}C
$$
It follows from (\ref{estizeta}) and the definition of $\zeta_e$, indeed
\begin{equation*}
\begin{split}
(\underline{x}_{e,(n+1)Q}-M) - (\overline{x}_{e,nQ}+M) & \geq (\zeta_e((n+1)Q)-2M)-(\zeta_e(nQ)+2M) \\ 
& = \zeta_e((n+1)Q)-\zeta_e(nQ)-4M \\
& = \int_{nQ}^{(n+1)Q}(m+1+ e_n) ds-4M\\
& =(m+1+ e_n)Q -4M \\
& > QK+\frac{Q(Q-1)}{2}C
\end{split}
\end{equation*}
with the last inequality coming from inequalities (\ref{eq2}) and (\ref{eq3}) in lemma \ref{fondamentale}. Now it is easy to prove (\ref{tre}):
\begin{equation*}
\begin{split}
x_{(n+1)Q}-x_{nQ} & =[x_{(n+1)Q}-(\underline{x}_{e,(n+1)Q}-M)] + [(\underline{x}_{e,(n+1)Q}-M)-(\overline{x}_{e,nQ}+M)]  \\
& + [(\overline{x}_{e,nQ}+M)-x_{nQ}]\\
& >0+QK+\frac{Q(Q-1)}{2}C+0\\
& =QK+\frac{Q(Q-1)}{2}C.
\end{split}
\end{equation*}
This gives the contradiction with (\ref{contra}).
\end{proof}
This lemma allows to affirm that every $\tilde{h}$-stationary configuration satisfying condition (\ref{pizz}) is in fact a $h$-stationary configuration. 
Now we are ready to construct the compact set $\mathcal{K}$.\\
Let $\Sigma_e$ be the set of all $h$-stationary configurations $\tau=(\tau_n)$ satisfying condition (\ref{pizz}).
We have just seen that it is non-empty. Let $\Sigma$ be the union of all the $\Sigma_e$ where $e$ ranges over all possible sequences of $\{0,1\}$. Then we put
$$
\mathcal{C}_e=\{(\tau_0,\tau_1) \mod \mathbb{Z}:\tau\in\Sigma_e \}
$$ 
and
$$
\mathcal{C}=\{(\tau_0,\tau_1) \mod \mathbb{Z}:\tau\in\Sigma \}
$$
Once again following Angenent \cite[Proposition 7.3]{angenent}, we can prove that these two sets are compact. Then we can define 
$$
\mathcal{K}_e=\{(\tau_0,\partial_{\tau_0}h(\tau_0,\tau_1)) :(\tau_0,\tau_1)\in\mathcal{C}_e \}
$$
and analogously $\mathcal{K}$. Note that if we call $E_0=\partial_{\tau_0}h(\tau_0,\tau_1)$ then the set $\mathcal{K}$ can be seen as a subset of initial conditions for the map $P$. Being the sets $\mathcal{C}_e$ and $\mathcal{C}$ compact, also the sets $\mathcal{K}_e$ and $\mathcal{K}$ are compact. To conclude the proof of Theorem \ref{maint} we need to prove that $\mathcal{K}$ is invariant under $P^Q$ and find a continuous function that gives the semi-conjugacy of the map $P^Q$ restricted to the set $\mathcal{K}$ with the Bernoulli shift. \\
Consider a point $(\tau_0,E_0)\in\mathcal{K}$. By the definition of the set $\mathcal{K}$, we have that $(\tau_0,E_0)\in\mathcal{K}_e$ for some sequence $(e_n)$. From this procedure, one can define a map
 $$
 \varepsilon:\mathcal{K}\rightarrow \{0,1\}^\mathbb{Z}.
 $$   
 However, two different sets $\mathcal{K}_{e^1}$ and $\mathcal{K}_{e^2}$ could overlap and the map $\varepsilon$ could be multivalued. We are going to prove that this cannot occur. The proof is inspired by Angenent \cite[Proposition 7.4]{angenent} but we prefer to perform it to put in evidence some details.
 \begin{lemma}
 The sets $\mathcal{K}_e$ are pairwise disjoint.
 \end{lemma}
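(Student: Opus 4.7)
The plan is to argue by contradiction. Suppose $(\tau_0,E_0)$ lies in $\mathcal{K}_{e^1}\cap \mathcal{K}_{e^2}$ for two distinct sequences $e^1,e^2\in\{0,1\}^{\mathbb{Z}}$. By definition, there exist $h$-stationary configurations $\tau^1\in\Sigma_{e^1}$ and $\tau^2\in\Sigma_{e^2}$ whose first two terms project to the same point on the cylinder, i.e.\ $\tau^2_0=\tau^1_0+k$ and $\tau^2_1=\tau^1_1+k$ for some $k\in\mathbb{Z}$ (recall that $\partial_1 h(\cdot,\cdot)$ is $\mathbb{Z}$-invariant under simultaneous shifts of both arguments, so the coincidence of $(\tau_0,E_0)$ together with the twist condition $\partial_{12}h>\delta$ pins down $\tau^2_1$ modulo the diagonal $\mathbb{Z}$-action). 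The goal is to show that this forces $e^1=e^2$.

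The first step is a \emph{rigidity} statement: two $h$-stationary configurations that agree on two consecutive indices (modulo a common integer shift) must agree on all indices. This follows from property (a1), since $\tilde{\Delta}(a,b,c)=0$ is strictly increasing in $c$, hence solvable uniquely for $c$ in terms of $(a,b)$, and symmetrically for $a$ in terms of $(b,c)$. Combining with the periodicity (a2), the shifted sequence $(\tau^1_n+k)_{n\in\mathbb{Z}}$ is another $h$-stationary configuration whose entries at $n=0,1$ coincide with those of $\tau^2$, so by uniqueness $\tau^2_n=\tau^1_n+k$ for every $n\in\mathbb{Z}$.

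The second step is the \emph{counting} argument, which exploits the quantitative choice of $Q$ in Lemma \ref{fondamentale}. From (\ref{pizz}) applied to both $\tau^1$ and $\tau^2$, together with the estimates (\ref{estizeta}), one obtains
\begin{equation*}
|\tau^i_n-\zeta_{e^i}(n)|\leq 2M\qquad(i=1,2,\;n\in\mathbb{Z}).
\end{equation*}
Substituting $\tau^2_n=\tau^1_n+k$ gives $|\zeta_{e^1}(n)-\zeta_{e^2}(n)+k|\leq 4M$ for every $n$. Taking $n=0$ yields $|k|\leq 4M$. Then, for $n=NQ$ with $N\in\mathbb{Z}$, the definition of $\zeta_e$ gives
\begin{equation*}
\zeta_{e^1}(NQ)-\zeta_{e^2}(NQ)=Q\sum_{j\in I_N}(e^1_j-e^2_j),
\end{equation*}
where $I_N$ is $\{0,\dots,N-1\}$ for $N>0$ and $\{N,\dots,-1\}$ for $N<0$ (with a sign). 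Hence the integer $S_N:=\sum_{j\in I_N}(e^1_j-e^2_j)$ satisfies $|QS_N|\leq 8M$, and by inequality (\ref{eq1}) in Lemma \ref{fondamentale} ($Q>8M$) this forces $S_N=0$ for every $N$. Consequently $e^1_j=e^2_j$ for all $j\in\mathbb{Z}$, contradicting $e^1\neq e^2$.

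I expect the most delicate step to be the first one, namely establishing the integer-shift rigidity of $h$-stationary configurations from the bare monotonicity property (a1); one must be careful that the equality on the cylinder only gives equality up to the diagonal $\mathbb{Z}$-action, so the shift parameter $k$ genuinely needs to be tracked through the counting argument. Once $k$ is identified as a bounded integer (via $n=0$) and the slopes $m+1,\,m+2$ of $\zeta_e$ are spaced by exactly $1$, the choice $Q>8M$ is precisely what makes the difference $QS_N$ too small to accommodate a nonzero integer, closing the argument.
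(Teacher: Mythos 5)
Your proof is correct and uses the same core ingredients as the paper's: the twist reduction to the sets $\mathcal{C}_e$, the combination of (\ref{pizz}) with (\ref{estizeta}), and the inequality $Q>8M$ from (\ref{eq1}). The paper's version avoids your explicit tracking of the integer shift $k$ and the telescoping of partial sums $S_N$ by passing directly to the shift-invariant differences $\tau_{(n+1)Q}-\tau_{nQ}$ (estimate (\ref{fr})) and reaching a contradiction at a single mismatch index $\bar n$; your rigidity step, deduced from (a1) and (a3), makes explicit what the paper leaves implicit in the phrase ``the point $(\tau_0,\tau_1)$ generates a stationary configuration.''
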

 \begin{proof}
Being $\partial_{12}h\geq\delta>0$ we just need to prove that the sets $\mathcal{C}_e$ are pairwise disjoint. Let us prove it. A point $(\tau_0,\tau_1)\in \mathcal{C}_e$ corresponds to a $h$-stationary configuration $\tau$. Using (\ref{pizz}) and (\ref{estizeta}) it is not difficult to see that
$$
|(\tau_{(n+1)Q}-\tau_{nQ})-(\zeta_e((n+1)Q)-\zeta_e(nQ))| \leq 4M.
$$
To get it consider the quantity $(\tau_{(n+1)Q}-\tau_{nQ})$, estimate it from both side using (\ref{pizz}) and then use (\ref{estizeta}). By the definition of the function $\zeta(\kappa)$ we have that  
\begin{equation}\label{fr}
|(\tau_{(n+1)Q}-\tau_{nQ}) - (m+1+ e_n)Q |\leq 4M.
\end{equation}
Consider now two different sequences of $\{0,1\}$ and call them $e^1=(e^1_n)$ and $e^2=(e_n^2)$. Being different there must exists $\bar{n}\in\mathbb{Z}$ such that, for example $e^1_{\bar{n}}=0$ and $e_{\bar{n}}^2=1$. Suppose by contradiction that $\mathcal{C}_{e^1}\cap\mathcal{C}_{e^2}=(\tau_0,\tau_1)$. The point $(\tau_0,\tau_1)$ generates a $h$-stationary configuration $(\tau_n)$ that must satisfies condition (\ref{fr}) for both $e^1$ and $e^2$. Hence
\begin{equation}\label{fre}
\begin{split}
|(\tau_{(\bar{n}+1)Q}-\tau_{\bar{n}Q}) & - (m+1)Q |\leq 4M \\
|(\tau_{(\bar{n}+1)Q}-\tau_{\bar{n}Q}) & - (m+2)Q |\leq 4M.
\end{split}
\end{equation}    
Remembering that we imposed (\ref{eq1}) we have that 
\begin{equation*}
\begin{split}
8M & <Q=(m+2)Q-(m+1)Q+(\tau_{(\bar{n}+1)Q}-\tau_{\bar{n}Q})-(\tau_{(\bar{n}+1)Q}-\tau_{\bar{n}Q}) \\
   & \leq |(\tau_{(\bar{n}+1)Q}-\tau_{\bar{n}Q}) - (m+1)Q |+|(\tau_{(\bar{n}+1)Q}-\tau_{\bar{n}Q}) - (m+1)Q |\leq 8M
\end{split}
\end{equation*}    
from (\ref{fre}). This contradiction concludes the proof.  
\end{proof}
It comes from the proof the precise definition of the map $\varepsilon$, we have:
$$
(\varepsilon(\tau_0,E_0))_n=e_n
$$
where $e_n$ is the only one that satisfies (\ref{fr}). Moreover, if we equip $\{0,1\}^\mathbb{Z}$ with the product topology the function is continuous. From the fact that we can define the functions $\chi_e$ and $\zeta_e$ for every sequence $e$, it is not difficult to see that the map $\varepsilon$ is surjective. Finally, we remember that the Bernoulli shift $\sigma:\{0,1\}^\mathbb{Z}\rightarrow \{0,1\}^\mathbb{Z}$ acts by $\sigma(e)_n=e_{n+1}$. Using (\ref{fr}) we have that the map $P^Q$ maps the set $\mathcal{K}_e$ into $\mathcal{K}_{\sigma(e)}$ from which we have that the set $\mathcal{K}$ is invariant under $P^Q$. Moreover it is easily seen that the following diagram
$$
\begin{diagram}
\node{\mathcal{K}} \arrow{e,t}{P^Q} \arrow{s,l}{\varepsilon}
      \node{\mathcal{K}} \arrow{s,r}{\varepsilon} \\
\node{\{0,1\}^\mathbb{Z}} \arrow{e,b}{\sigma}
   \node{\{0,1\}^\mathbb{Z}}
\end{diagram}
$$   
commutes.\\

We conclude proving that our method allows to find infinitely many disjoint invariant set $\mathcal{K}$ leading to chaos. To this purpose, it is sufficient to prove that one can detect several sets $\mathcal{C}$. The set $\mathcal{C}$ is determined by sequences satisfying condition (\ref{pizz}). Using (\ref{estizeta}) and the definition of the function $\zeta_e$ one can prove that for every $n\in\mathbb{Z}$ the estimates 
\begin{equation*}
\begin{split}
\overline{x}_{e,n}+M & \leq n(m+2)+2M \\
\underline{x}_{e,n}-M & \geq n(m+1)-2M
\end{split}
\end{equation*}   
holds. So, every configuration $(x^*_n)$ defining the set $\mathcal{C}$ is such that 
$$
n(m+1)-2M \leq x^*_n\leq n(m+2)+2M
$$ 
for every $n$. Now it is easy to conclude letting $m$ (or equivalently $Q$) tend to $+\infty$.

\bibliographystyle{plain}
\bibliography{biblio4}
\end{document}